\numberwithin{equation}{section}
\theoremstyle{plain}
\newtheorem{theorem}{Theorem}[section]
\newtheorem{lemma}[theorem]{Lemma}
\newtheorem{proposition}[theorem]{Proposition}
\theoremstyle{definition}
\newtheorem{definition}[theorem]{Definition}
\theoremstyle{remark}
\newcommand{\Ric}{{\rm{Ricci}}}
\newcommand{\bRic}{{\bf Ricci}}
\newcommand{\bRicn}{{\bf Ricci}_N}
\newcommand{\tr}{{\rm{tr}}}
\renewcommand{\H}{{\mathrm{H}}}
\newcommand{\mm}{\mathfrak m}
\newcommand{\ms}{(X,\d,\mm)}
\newcommand{\rcdkn}{{\rm RCD}^*(K, N)}
\newcommand{\rcd}{{\rm RCD}(K, \infty)}
\newcommand{\N}{\mathbb{N}}
\newcommand{\R}{\mathbb{R}}
\newcommand{\Id}{{\rm Id}}
\renewcommand{\d}{{\mathrm d}}
\newcommand{\D}{{\mathrm D}}
\newcommand{\restr}[1]{\lower3pt\hbox{$|_{#1}$}}
\newcommand{\la}{{\langle}}
\newcommand{\ra}{{\rangle}}
\newcommand{\nchi}{{\raise.3ex\hbox{$\chi$}}}
\title{{\bf  Ricci tensor on $\rcdkn$ spaces}
}
\begin{document}
\author{ Bang-Xian Han \thanks
{ Hausdorff Center for Mathematics, University of Bonn,
han@iam.uni-bonn.de}
}

\date{\today}
\maketitle

\begin{abstract}
We obtain an improved Bochner inequality based on the curvature-dimension condition $\rcdkn$ and  propose a definition of $N$-dimensional Ricci tensor on metric measure spaces.
\end{abstract}

\textbf{Keywords}: curvature-dimension condition, Bakry-\'Emery theory, Bochner inequality, Ricci tensor, metric measure space.\\

\tableofcontents


\section{Introduction}
Let $M$ be a Riemannian manifold equipped with a metric tensor $\la \cdot, \cdot \ra: [TM]^2 \mapsto C^\infty(M)$. We have the Bochner formula
\begin{equation}\label{bf}
\Gamma_2(f)=\Ric(\nabla f, \nabla f)+| \H_f |_{\rm HS}^2,
\end{equation}
valid for any smooth function $f$, where $| \H_f |_{\rm HS}$ is the Hilbert-Schmidt norm of the Hessian $\H_f:=\nabla \d f$ and the operator $\Gamma_2$ is defined by
\[
\Gamma_2(f):=\frac12 L \Gamma(f,f) -\Gamma(f, Lf),\qquad\Gamma(f,f):=\frac12L(f^2)-fLf
\]
where $\Gamma(\cdot, \cdot)=\la \nabla \cdot, \nabla \cdot \ra$, and $L=\Delta$ is the Laplace-Beltrami operator.

In particular, if the Ricci curvature of $M$ is bounded from below by $K$, i.e. $\Ric(v,v)(x)\geq K |v|^2(x)$ for any $x\in M$ and $v\in T_xM$,  and the dimension is bounded from above by $N\in [1,\infty]$, we have the Bochner inequality
\begin{equation} \label{bekn}
\Gamma_2(f) \geq \frac1N(L f)^2+K\Gamma(f).
\end{equation}
Conversely, it is not hard to show that the validity of \eqref{bekn} for any smooth function $f$ implies that the manifold has lower Ricci curvature bound  $K$ and upper dimension bound  $N$, or in short that it is a ${\rm CD}(K,N)$ manifold.

Being this characterization of the ${\rm CD}(K,N)$ condition only based on properties of $L$, one can take \eqref{bekn} as a definition of the ${\rm CD}(K,N)$ condition with respect to the diffusion operator $L$. This was the approach suggested by Bakry-\'Emery in \cite{BE-D}, we refer to \cite{BGL-A} for an overview on this subject.

Following this line of thought, one can wonder  whether in this framework we can recover the definition of the Ricci curvature tensor and deduce from \eqref{bekn} that it is bounded from below by $K$. From \eqref{bf} we see that a natural definition is
\begin{equation}
\label{eq:defric}
\Ric(\nabla f,\nabla f):=\Gamma_2(f)-| \H_f |_{\rm HS}^2,
\end{equation}
and it is clear that if $\Ric\geq K$, then \eqref{bekn} holds with $N=\infty$. There are few things that need to be understood in order to make definition \eqref{eq:defric} rigorous and complete in the setting of diffusion operators:
\begin{itemize}
\item[1)] If our only data is the diffusion operator $L$, how can we give a meaning to the Hessian term in \eqref{eq:defric}?
\item[2)] Can we  deduce that the Ricci curvature defined as in \eqref{eq:defric} is actually bounded from below by $K$ from the assumption \eqref{bekn}?
\item[3)] Can we include the upper bound on the dimension in the discussion? How the presence of $N$ affects the definition of the Ricci curvature?
\end{itemize}
This last question has a well known answer thanks to the work of Bakry-\'Emery: it turns out that the correct thing to do is to define, for every $N\geq 1$, a sort of `$N$-dimensional' Ricci tensor as follows:
\begin{equation}
\label{eq:defricn}
\Ric_N(\nabla f, \nabla f):=\left\{\begin{array}{ll}
{ \Gamma}_2(f)-| \H_f |_{\rm HS}^2-\frac1{N-n}(\tr \H_f -L f)^2,&\qquad \text{if }N>n,\\
{ \Gamma}_2(f)-| \H_f |_{\rm HS}^2-\infty(\tr \H_f -L f)^2,&\qquad \text{if }N=n,\\
-\infty,&\qquad\text{if }N<n,
\end{array}\right.
\end{equation}
where $n$ is the dimension of the manifold (recall that on a weighted manifold in general we have ${\rm tr}\H_f\neq \Delta f$). It is then not hard to see that if $\Ric_N\geq K$ then indeed \eqref{bekn} holds.

It is harder to understand how to  go back and prove that $\Ric_N\geq K$ starting from \eqref{bekn}. A first step in this direction, which answers (1), is to notice that in the smooth setting the  identity
\[
2\H_f(\nabla g,\nabla h)=\Gamma(g,\Gamma(f,h))+\Gamma(h,\Gamma(f,g))-\Gamma(f,\Gamma(g,h))
\]
for any smooth $g,h$ totally characterizes the Hessian of $f$, so that the same identity can be used to define the Hessian starting from a diffusion operator only. The question is then whether one can prove any efficient bound on it starting from \eqref{bekn} only. The first result in this direction was obtained by Bakry in \cite{B-T} and \cite{B-H}, and  recently Sturm \cite{S-R} concluded the argument showing that \eqref{bekn} implies $\Ric_N\geq K$. In Sturm's approach, the operator $\Ric_N$ is not defined as in \eqref{eq:defricn}, but rather as
\begin{equation}
\label{eq:sturm}
\Ric_N(\nabla f,\nabla f)(x):=\inf_{g\ : \ \Gamma(f-g)(x)=0}  \Gamma_2(g)(x)-\frac{(L g)^2(x)}N
\end{equation}
and it is part of his contribution in the proof that this definition is equivalent to \eqref{eq:defricn}.

\bigskip

All this for smooth, albeit possibly abstract, structures. On the other hand, there is as of now a quite well established theory of (non-smooth) metric measure spaces satisfying a curvature-dimension condition: that of $\rcdkn$ spaces introduced by Ambrosio-Gigli-Savar\'e (see \cite{AGS-M} and \cite{G-O}) as a refinement of the original intuitions of Lott-Sturm-Villani (see \cite{Lott-Villani09} and \cite{S-O1, S-O2}) and Bacher-Sturm (see \cite{BS-L}). In this setting, there is a very natural Laplacian and inequality \eqref{bekn} is known to be valid in the appropriate weak sense (see \cite{AGS-M}, \cite{AMS-N} and \cite{EKS-O}) and one can therefore wonder if even in this low-regularity situation one can produce an effective notion of $N$-Ricci curvature. Part of the problem here is the a priori lack of vocabulary, so that for instance it is unclear what a vector field should be.

In the recent paper \cite{G-N}, Gigli builds a differential structure on metric measure spaces suitable to handle the objects we are discussing  (see the preliminary section for some details). One of his results is to give a meaning to formula \eqref{eq:defric} on $\rcd$ spaces and to prove that the resulting Ricci curvature tensor, now measure-valued, is bounded from below by $K$. Although giving comparable results, we remark that the definitions used in \cite{G-N} are different from those in \cite{S-R}: it is indeed  unclear how to give a meaning to formula \eqref{eq:sturm} in the non-smooth setting, so that in \cite{G-N} the definition \eqref{eq:defric} has been adopted.

Gigli worked solely in the $\rcd$ setting. The contribution of the current work is to adapt Gigli's tool and Sturm's computations to give a complete description of the $N$-Ricci curvature tensor on $\rcdkn$ spaces for $N<\infty$.

In our main result Theorem \ref{th-ricci}, we prove that the $N$-Ricci curvature is  bounded from below by $K$ on a ${\rm RCD}(K',\infty)$ space if and only if the space is $\rcdkn$. As an application of our theory, it is possible study the curvature dimension condition of metric measure space under several transformations, see \cite{H-C} for more details.


\section{Preliminaries}
Let $M=\ms$ be a $\rcd$ metric measure space for some $K \in \R$ (or for simplicity, a ${\rm RCD}$ space). We denote the space of finite  Borel measures on $X$ by ${\rm Meas}(M)$, and equip it with the total variation norm $\| \cdot \|_{\rm TV}$.

The Sobolev space $W^{1,2}(M)$ is defined as in \cite{AGS-C}, and the minimal weak upper gradient (or call it weak gradient for simplicity) of a function $f \in W^{1,2}(M)$ is denoted by $|\D f|$. It is part of the definition of ${\rm RCD}(K,\infty)$ space that $W^{1,2}(M)$ is a Hilbert space, in which case $\ms$ is called infinitesimally Hilbertian space (see \cite{G-O} for more discussions).
In order to introduce the concepts of `tangent/cotangent vector field' in non-smooth setting, we will use the vocabulary  of  $L^\infty$-modules.

\begin{definition}[$L^2$-normed $L^\infty$-module]\label{L2norm}

Let $M=\ms$ be a metric measure space. A $L^2$-normed $L^\infty(M)$ module is a Banach space $({\bf B}, \| \cdot \|_{\bf B})$ equipped with a bilinear map
\begin{eqnarray*}
L^\infty(M) \times {\bf B} &\mapsto&  {\bf B},\\
 (f,v) &\mapsto& f \cdot v
\end{eqnarray*}
such that
\begin{eqnarray*}
(fg) \cdot v &=& f \cdot  (g \cdot v),\\
 {\bf 1} \cdot v &=& v
\end{eqnarray*}
for every $v \in {\bf B}$ and $f, g \in L^\infty(M)$, where ${\bf 1} \in L^\infty(M)$ is the function identically equal to 1 on $X$, and a `pointwise norm' $|\cdot|: {\bf B} \mapsto  L^2(M)$ which maps $v \in {\bf B}$ to  a non-negative function in $L^2(M)$
such that
\begin{eqnarray*}
\|v\|_{\bf B} &=&  \| |v| \|_{L^2}\\
|f\cdot v| &=& |f||v|,~~ \mm-\text {a.e}.
\end{eqnarray*}
for every $f \in L^\infty(M)$  and $v \in  {\bf B}$.



\end{definition}

Now we define the tangent and cotangent modules of $M$ which are particular examples of $L^2$-normed module.  We define the `Pre-Cotangent Module' $\mathcal{PCM}$ as the set consisting the elements of the from  $\{ (A_i, f_i )\}_{i  \in \mathbb{N}}$, where $\{A_i\}_{i \in \mathbb{N}}$ is a Borel partition of $X$, and $\{f_i\}_i$ are Sobolev functions such that $\sum_i \int_{A_i} |\D f_i|^2 <\infty$.

We define an equivalence relation on $\mathcal{PCM}$ via
\[
 \{ (A_i, f_i )\}_{i  \in \mathbb{N}} \thicksim \{ (B_j, g_j )\}_{j  \in \mathbb{N}}~~~\text {if}~~~~
 |\D(g_j-f_i)|=0, ~~\mm-\text {a.e. on}~ A_i \cap B_j.
\]
We denote the equivalence class of $\{ (A_i, f_i )\}_{i  \in \mathbb{N}}$ by $[ (A_i, f_i )]$. In particular, we call $[(X, f)]$ the differential of a Sobolev function $f$ and denote it by $\d f$.

Then we define the following operations:
\begin{itemize}\label{premodule}
\item [1)] $[ (A_i, f_i )]+[ (B_i, g_i )]:=[ (A_i \cap B_j, f_i+g_j )]$,\\

\item [2)] Multiplication by scalars: $\lambda [ (A_i, f_i )]:= [ (A_i, \lambda f_i )]$,\\

\item [3)] Multiplication by simple functions: $(\sum_j \lambda_j \nchi_{B_j})  [ (A_i, f_i )]:=[ (A_i \cap B_j, \lambda_j f_i )]$, \\
\item [4)] Pointwise norm: $|[ (A_i, f_i )]|:=\sum_i \nchi_{A_i} |\D f_i|$,
\end{itemize}
where $\nchi_A$ denote the characteristic function on the set $A$.

It can be seen that all the operations above are continuous on $\mathcal{PCM}/ \thicksim$ with respect to the norm $\| [ (A_i, f_i )] \|:= \sqrt{\int|[ (A_i, f_i )]|^2\,  \mm}$ and the $L^\infty(M)$-norm on the space of simple functions. Therefore we can extend them to the completion of $(\mathcal{PCM}/\thicksim, \| \cdot \|)$ and we denote this completion by $L^2(T^*M)$. As a consequence of our definition, we can see that $L^2(T^* M)$ is the $\| \cdot \|$ closure of $\{\sum_{i \in I} a_i \d f_i: |I|<\infty,  a_i \in L^\infty(M), f_i \in W^{1,2} \}$ (see Proposition 2.2.5 in \cite{G-N} for a proof).  It can also be seen from the definition and the infinitesimal Hilbertianity assumption on $M$ that $L^2(T^*M)$ is a Hilbert space equipped with the inner product induced by $\| \cdot \|$. Moreover, $(L^2(T^*M), \| \cdot \|, |\cdot |)$ is a $L^2$-normed module according to the Definition \ref{L2norm}, which we shall call cotangent module of $M$.

We then define the tangent module $L^2(TM)$ as  $\mathrm{Hom}_{L^\infty(M)}(L^2(T^*M), L^1(M))$, i.e. $T \in L^2(T^* M)$ if it is a continuous linear map from $L^2(T^*M)$  to $L^1(M)$ viewed as Banach spaces satisfying the homogeneity:
\[
T(f v)=f  T(v), ~~\forall v \in L^2(T^*M),~~ f \in L^\infty(M).
\]

It can be seen that $L^2(TM)$ has a natural $L^2$-normed $L^\infty(M)$-module structure and  is isometric to $L^2(T^*M)$ both as a module and as a Hilbert space. We denote the corresponding element of $\d f$ in $L^2(TM)$ by $\nabla f$ and call it the gradient of $f$ (see also the Riesz theorem for Hilbert modules in Chapter 1 of \cite{G-N}). The natural pointwise norm on $L^2(TM)$ (we also denote it by $| \cdot |$) satisfies $|\nabla f|=|\d f|=|\D f|$. We can also prove that  $\{\sum_{i \in I} a_i  \nabla f_i: |I|<\infty,  a_i \in L^\infty(M), f_i \in W^{1,2} \}$ is dense in $L^2(T M)$.

In other words,  we have a pointwise inner product $\la \cdot, \cdot \ra: [L^2(T^*M)]^2 \mapsto L^1(M)$ satisfying
\[
\la \d f, \d g \ra:= \frac14 \Big{(}|\D (f+g)|^2-|\D (f-g)|^2\Big{)}
\]
for $f, g \in W^{1,2}(M)$. Thus for any $g\in W^{1,2}(M)$, we can define the gradient $\nabla g$  as the element in $L^2(TM)$ such that $\nabla g (\d f)=\la \d f, \d g \ra, \mm$-a.e. for every $f \in W^{1,2}(M)$. Therefore, $L^2(TM)$ inherits a pointwise inner product from $L^2(T^*M)$ and we still use $\la \cdot, \cdot \ra$ to denote it.

Then we can define the Laplacian by duality (integration by part) in the same way as on a Riemannian manifold.

\begin{definition}
[Measure valued Laplacian, \cite{G-O, G-N}]
The space ${\rm D}({\bf \Delta}) \subset  W ^{1,2}(M)$ is the space of $f \in  W ^{1,2} (M)$ such that there is a measure ${\bf \mu}$ satisfying
\[
\int h \,\d {\bf \mu}= -\int \la \nabla h, \nabla f \ra \,   \d \mm, \forall h: M \mapsto  \R, ~~ \text{Lipschitz with bounded support}.
\]
In this case the measure $\mu$ is unique and we shall denote it by ${\bf \Delta} f$. If ${\bf \Delta} f \ll m$, we denote its density by $\Delta f$.
\end{definition}

In \cite{S-S} the author introduces the space of test functions ${\rm TestF}(M) \subset W^{1,2}(M)$,  which is defined as 
\[
{\rm TestF}(M):= \{f \in {\rm D} ({\bf \Delta}) \cap L^\infty: |\D f|\in L^\infty~~ {\rm and}~~~ \Delta f  \in W^{1,2}(M) \}.
\]
It is known from \cite{AGS-M} that ${\rm TestF}(M)$ is dense in $W^{1,2}(M)$ provided $M$ is $\rcd$. It is also proved in \cite{S-S} that ${\rm TestF}(M)$ is an algebra under the same assumption.
In particular, we know that $\la \nabla f, \nabla g \ra \in {\rm D}({\bf \Delta})\subset W^{1,2}(M)$ for any $f, g \in {\rm TestF}(M)$. Therefore we can define the Hessian and $\Gamma_2$ operator as follows.

Let $f \in {\rm TestF}(M)$. We define the Hessian $\H_f: \{ \nabla g: g \in {\rm TestF}(M)\}^2 \mapsto L^0(M)$ by
\[
2\H_f(\nabla g,\nabla h)=\la \nabla g, \nabla \la \nabla f, \nabla h \ra \ra +\la \nabla h, \nabla \la \nabla f, \nabla g \ra \ra-\la \nabla f, \nabla \la \nabla g, \nabla h \ra \ra
\]
 for any $g, h \in {\rm TestF}(M)$. Using the estimate obtained in \cite{S-S}, it can be seen that $\H_f$ can be extended to a symmetric $L^\infty(M)$-bilinear map on $L^2(TM)$ and continuous with values in $L^0(M)$, see Theorem 3.3.8 in \cite{G-N} for a proof.

Let $f,g \in {\rm TestF}(M)$. We define the measure ${\bf \Gamma}_2(f,g)$ as
\[
{\bf \Gamma}_2(f,g)=\frac12 {\bf \Delta} \la \nabla f, \nabla g \ra -\frac12 \big{(}\la \nabla f, \nabla \Delta g \ra+\la \nabla g, \nabla \Delta f \ra\big{)}\, \mm,
\]
and we put ${\bf \Gamma}_2(f):={\bf \Gamma}_2(f,f)$.

Then we recall some results on the non-smooth Bakry-\'Emery theory.

\begin{proposition} [Bakry-\'Emery condition, \cite{AGS-B}, \cite{EKS-O}]\label{becondition}
Let $M=\ms$ be  a ${\rm RCD}$ space. Then it is a $\rcdkn$ space with $K \in \R$ and $N \in [1, \infty]$ if and only if
\[
{\bf \Gamma}_2(f) \geq \Big {(} K |\D f|^2+ \frac1N (\Delta f)^2 \Big{)}\,\mm
\]
for any $f \in {\rm TestF}(M)$.
\end{proposition}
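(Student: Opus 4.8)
\emph{Proof idea.} The plan is to use the heat semigroup $(P_t)_{t\ge0}$ on $M$ — the linear $L^2$-gradient flow of the Cheeger energy, which thanks to the infinitesimal Hilbertianity built into the ${\rm RCD}$ hypothesis is a symmetric Markov semigroup smoothing $W^{1,2}(M)$ into ${\rm TestF}(M)$ and coinciding with the $\ws$-gradient flow of the entropy ${\rm Ent}$ — as a bridge between the optimal-transport description of $\rcdkn$ and the Bochner inequality. The common hub of the two implications is the dimensional Bakry--Ledoux gradient estimate
\[
|\D P_t f|^2 + \frac{1-e^{-2Kt}}{NK}\,|\Delta P_t f|^2 \;\le\; e^{-2Kt}\,P_t\big(|\D f|^2\big)\qquad \mm\text{-a.e.},\quad t>0,
\]
where $\tfrac{1-e^{-2Kt}}{NK}$ is read as $2t/N$ when $K=0$; by Kuwada's duality lemma, in the dimensional form of \cite{EKS-O}, this is equivalent to a dimensional $\ws$-contraction of $(P_t)$ with a negative excess term of order $1/N$. (For $N=\infty$ this circle of ideas is due to \cite{AGS-B}; the dimensional refinement is \cite{EKS-O}.)

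\emph{The implication $\rcdkn\Rightarrow$ Bochner.} By \cite{BS-L, EKS-O}, on our ${\rm RCD}$ space $M$ — which is in particular essentially non-branching — the condition $\rcdkn$ is equivalent to the entropic curvature-dimension condition ${\rm CD}^e(K,N)$, hence to $(P_t)$ being the ${\rm EVI}_{K,N}$-gradient flow of ${\rm Ent}$; from ${\rm EVI}_{K,N}$ one reads off the dimensional $\ws$-contraction, and therefore, via Kuwada duality, the gradient estimate above. The difference of its two sides vanishes at $t=0$ and is nonnegative for $t>0$, so its right derivative at $t=0$ is $\ge0$; testing against an arbitrary nonnegative Lipschitz $\phi$ with bounded support, multiplying through and computing this derivative — using $\ddt P_tf=\Delta P_tf$, the identity $\ddt\big|_{t=0}\int\phi\,P_tg\,\mm=\int\phi\,{\bf \Delta}g$ valid for $g\in{\rm D}({\bf \Delta})$, and the fact from \cite{S-S} that $\la\nabla f,\nabla f\ra\in{\rm D}({\bf \Delta})$ for $f\in{\rm TestF}(M)$ — one is led to
\[
\int\phi\,{\bf \Gamma}_2(f)\;\ge\;\int\phi\Big(K|\D f|^2+\tfrac1N(\Delta f)^2\Big)\,\mm .
\]
As $\phi$ is arbitrary, this is the claimed measure inequality.

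\emph{The converse.} Assume the Bochner inequality and run Bakry's interpolation: for $f\in{\rm TestF}(M)$ and $t>0$ set $\Lambda(s):=P_s\big(|\D P_{t-s}f|^2\big)$, $s\in[0,t]$, so that $\Lambda'(s)=2P_s\big({\bf \Gamma}_2(P_{t-s}f)\big)\ge2K\Lambda(s)+\tfrac2N P_s\big((\Delta P_{t-s}f)^2\big)$ by the pointwise Bochner bound; since $s\mapsto P_s(\Delta P_{t-s}f)\equiv\Delta P_tf$ is constant, Jensen's inequality gives $P_s((\Delta P_{t-s}f)^2)\ge(\Delta P_tf)^2$, and a Gr\"onwall comparison on $[0,t]$ yields the dimensional gradient estimate. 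From here — this being the technical core of \cite{EKS-O} — Kuwada duality produces the dimensional $\ws$-contraction, which is then shown to imply that $(P_t)$ is the ${\rm EVI}_{K,N}$-gradient flow of ${\rm Ent}$, equivalently that $M$ satisfies ${\rm CD}^e(K,N)$; since $M$ is infinitesimally Hilbertian, $M$ is $\rcdkn$.

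I expect the main obstacle to be the rigorous justification of the differentiations above with only a measure-valued Laplacian at hand: in the interpolation one must show $s\mapsto\Lambda(s)$ is $C^1$ and that $\Lambda'(s)=2P_s({\bf \Gamma}_2(P_{t-s}f))$ in the appropriate weak sense, and in the expansion at $t=0$ one must control the $t$-derivative of $\int\phi[\,\cdots\,]\,\mm$ uniformly — both handled by mollifying the flow in time and using the quantitative smoothing of $P_t$ into ${\rm TestF}(M)$, so that all quantities become genuinely differentiable and the error terms vanish in the limit. The other delicate point, living entirely inside \cite{EKS-O}, is the Eulerian-to-Lagrangian passage from the $\ws$-contraction (or the gradient estimate) to the condition ${\rm CD}^e(K,N)$: this is not a soft duality but relies on localisation and gluing arguments. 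By contrast, once ${\rm EVI}_{K,N}$ — equivalently, the contraction — is available, the descent to the Bochner inequality is comparatively routine.
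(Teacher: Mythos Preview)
The paper does not give its own proof of this proposition: it is stated in the Preliminaries as a result quoted from \cite{AGS-B} and \cite{EKS-O}, with no argument supplied. Your sketch is a faithful outline of the strategy actually used in those references --- the heat-semigroup bridge between the Lagrangian $\rcdkn$/${\rm CD}^e(K,N)$/${\rm EVI}_{K,N}$ side and the Eulerian $\bekn$ side, via Kuwada duality and the dimensional Bakry--Ledoux gradient estimate --- so there is nothing to compare against within the paper itself, and your proposal is consistent with the cited sources.
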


\begin{lemma}[\cite{S-S}]\label{self improve}
Let $M=\ms$ be a $\rcdkn$  space, $n \in \N$, $f_1, ... , f_n \in {\rm TestF}(M)$ and $\Phi \in C^\infty(\R^n)$
be with $\Phi(0)=0$. Put $ {\bf f}=(f_1, ... , f_n)$,
then $\Phi({\bf f}) \in {\rm TestF}(M)$.  In particular, ${\bf \Gamma}_2( \Phi({\bf f})) \geq \Big{(} K|\D \Phi({\bf f})|^2+\frac{(\Delta \Phi({\bf f}))^2}{N} \Big{)}\, \mm$.
\end{lemma}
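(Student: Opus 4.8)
The inequality for ${\bf\Gamma}_2$ is immediate from the membership $\Phi({\bf f})\in{\rm TestF}(M)$ together with Proposition~\ref{becondition} (which, $M$ being $\rcdkn$, applies to the test function $\Phi({\bf f})$), so the whole task is to prove $\Phi({\bf f})\in{\rm TestF}(M)$. The plan is to run the first- and second-order calculus rules of the $W^{1,2}$-theory; the only non-elementary ingredient will be the regularity $\la\nabla f_i,\nabla f_j\ra\in W^{1,2}(M)$ recalled above from \cite{S-S} (with $\la\nabla f_i,\nabla f_j\ra\in L^\infty(M)$ coming for free from $|\D f_i|,|\D f_j|\in L^\infty(M)$).

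First I would reduce to the case in which $\Phi$ and all its first and second derivatives are bounded and globally Lipschitz on $\R^n$: since $f_1,\dots,f_n\in L^\infty(M)$ the map ${\bf f}$ lands $\mm$-a.e. in a fixed ball $B_R\subset\R^n$ centered at the origin, and altering $\Phi$ outside $B_{2R}$ changes neither $\Phi({\bf f})$ nor any expression below while keeping $\Phi(0)=0$. With this reduction, the chain rule for differentials gives $\Phi({\bf f})\in W^{1,2}(M)$ with $\nabla\Phi({\bf f})=\sum_i(\partial_i\Phi)({\bf f})\nabla f_i$ (the hypothesis $\Phi(0)=0$ being what forces $\Phi({\bf f})\in L^2(M)$); boundedness of $\Phi$ on $B_R$ gives $\Phi({\bf f})\in L^\infty(M)$, and $|\D\Phi({\bf f})|\le\sum_i|(\partial_i\Phi)({\bf f})|\,|\D f_i|$ together with $|\D f_i|\in L^\infty(M)$ gives $|\D\Phi({\bf f})|\in L^\infty(M)$.

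Next I would establish $\Phi({\bf f})\in{\rm D}({\bf\Delta})$, with ${\bf\Delta}\Phi({\bf f})\ll\mm$ and density
\[
\Delta\Phi({\bf f})=\sum_i(\partial_i\Phi)({\bf f})\,\Delta f_i+\sum_{i,j}(\partial_{ij}\Phi)({\bf f})\,\la\nabla f_i,\nabla f_j\ra .
\]
To see this, I would test $-\int\la\nabla h,\nabla\Phi({\bf f})\ra\,\mm$ against Lipschitz $h$ with bounded support: the Leibniz rule rewrites $(\partial_i\Phi)({\bf f})\la\nabla h,\nabla f_i\ra$ as $\la\nabla\big(h(\partial_i\Phi)({\bf f})\big),\nabla f_i\ra-h\sum_j(\partial_{ij}\Phi)({\bf f})\la\nabla f_i,\nabla f_j\ra$, and since $h(\partial_i\Phi)({\bf f})\in W^{1,2}(M)$ has bounded support while $\Delta f_i\in L^2(M)$, the integration by parts defining ${\bf\Delta}f_i$ applies to it; summing over $i$ and rearranging produces exactly $\int h\cdot(\text{the right-hand side above})\,\mm$. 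It then remains to verify that this density lies in $W^{1,2}(M)$: each $(\partial_i\Phi)({\bf f})$ and $(\partial_{ij}\Phi)({\bf f})$ is bounded with bounded pointwise gradient (chain rule again, via $|\D f_k|\in L^\infty(M)$), $\Delta f_i\in W^{1,2}(M)$ because $f_i\in{\rm TestF}(M)$, and $\la\nabla f_i,\nabla f_j\ra\in W^{1,2}(M)\cap L^\infty(M)$ as above; and a product of a function in $W^{1,2}(M)$ — or merely in $L^2(M)$, as is $\Delta f_i$ — with a bounded function having bounded gradient is again in $W^{1,2}(M)$, obtained by truncation and closedness of the differential. Hence $\Delta\Phi({\bf f})\in W^{1,2}(M)$, so $\Phi({\bf f})\in{\rm TestF}(M)$, and Proposition~\ref{becondition} finishes the proof.

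I expect the hard part to be the final $W^{1,2}$-regularity step: carrying out the Leibniz computations rigorously when the factors are only in $W^{1,2}\cap L^\infty$ rather than Lipschitz (which forces minor truncation/localization arguments, $(\partial_i\Phi)({\bf f})$ itself needing only to be locally in $W^{1,2}$), and genuinely invoking — rather than reproving — the fact $\la\nabla f_i,\nabla f_j\ra\in W^{1,2}(M)$. Everything else is routine bookkeeping with the chain rule.
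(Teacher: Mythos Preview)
The paper does not give its own proof of this lemma: it is quoted as a preliminary result from Savar\'e \cite{S-S}, so there is nothing in the paper to compare your argument against. That said, your reconstruction is the standard one and is correct: cut $\Phi$ off outside a ball containing the (bounded) range of ${\bf f}$, use the chain rule to get $\Phi({\bf f})\in W^{1,2}\cap L^\infty$ with $|\D\Phi({\bf f})|\in L^\infty$, compute ${\bf\Delta}\Phi({\bf f})$ via integration by parts and the Leibniz rule, and verify $\Delta\Phi({\bf f})\in W^{1,2}$ using $\Delta f_i\in W^{1,2}$ and Savar\'e's $\la\nabla f_i,\nabla f_j\ra\in W^{1,2}\cap L^\infty$; the ${\bf\Gamma}_2$ inequality is then immediate from Proposition~\ref{becondition}. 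This is exactly how the argument runs in \cite{S-S}.

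One small correction: the parenthetical ``or merely in $L^2(M)$, as is $\Delta f_i$'' should be dropped. A product of a bare $L^2$ function with a bounded function of bounded gradient need not lie in $W^{1,2}$; what you actually need, and have, is $\Delta f_i\in W^{1,2}(M)$, which is part of the definition of ${\rm TestF}(M)$. The localization issue you flag for $(\partial_i\Phi)({\bf f})$ when $\mm(X)=\infty$ is real but routine: the Leibniz rule $g h\in W^{1,2}$ for $g\in W^{1,2}$ and $h$ bounded with $|\D h|\in L^\infty$ holds without $h\in L^2$, by truncation and closedness of the differential, exactly as you indicate.
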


\begin{lemma}[Chain rules, \cite{B-H}, \cite{S-S}]\label{chain}
Let $f_1, ... , f_n \in {\rm TestF}(M)$ and $\Phi \in C^\infty(\R^n)$ be with $\Phi(0)=0$. Put $ {\bf f}=(f_1, ... , f_n)$, then
\[
|\D \Phi({\bf f})|^2\,\mm=\mathop{\sum}_{i,j=1}^n \Phi_i\Phi_j ({\bf f}) \la \nabla f_i, \nabla f_j \ra\,\mm,
\]
\begin{eqnarray*}
{\bf \Gamma}_2(\Phi({\bf f}))&=& \mathop{\sum}_{i,j} \Phi_i\Phi_j ({\bf f}){\bf \Gamma}_2(f_i, f_j)\\ &+& 2\mathop{\sum}_{i,j,k} \Phi_i\Phi_{j,k} ({\bf f})\H_{f_i}(\nabla f_j, \nabla f_k)\,\mm\\
&+& \mathop{\sum}_{i,j,k,l} \Phi_{i,j}\Phi_{k,l} ({\bf f})\la \nabla f_i, \nabla f_k \ra \la \nabla f_j, \nabla f_l \ra\,\mm,
\end{eqnarray*}
and
\[
{\bf \Delta} \Phi({\bf f})=\mathop{\sum}_{i=1}^n \Phi_i({\bf f}){\bf \Delta} f_i+\mathop{\sum}_{i,j=1}^n \Phi_{i,j}({\bf f}) \la \nabla f_i, \nabla f_j \ra \,\mm.
\]

\end{lemma}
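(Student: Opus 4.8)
The plan is to prove all three identities first in the special case where $\Phi$ is a polynomial with $\Phi(0)=0$, where they follow by induction from Leibniz-type rules, and then to remove this restriction by a uniform approximation. What makes the approximation work is that each $f_i\in L^\infty$, so that ${\bf f}=(f_1,\dots,f_n)$ maps $M$ into a fixed compact box $Q\subset\R^n$; by Stone--Weierstrass together with a mollification one can choose polynomials $P_m$ with $P_m(0)=0$ and $P_m\to\Phi$, $\partial_iP_m\to\Phi_i$, $\partial_i\partial_jP_m\to\Phi_{ij}$ uniformly on $Q$. By Lemma \ref{self improve} the functions $P_m({\bf f})$ and $\Phi({\bf f})$ all lie in ${\rm TestF}(M)$, so that every object below is well defined, and moreover $P_m({\bf f})\to\Phi({\bf f})$ in $W^{1,2}(M)\cap L^\infty(M)$.

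For the gradient identity I would polarize the one-dimensional chain rule $|\D(\varphi\circ f)|=|\varphi'\circ f|\,|\D f|$ for the minimal weak upper gradient and use the infinitesimal Hilbertianity of $M$; this gives the Leibniz rule $\nabla(uv)=u\,\nabla v+v\,\nabla u$, hence by induction $\nabla P({\bf f})=\sum_iP_i({\bf f})\,\nabla f_i$ for polynomials, and the formula for $|\D\Phi({\bf f})|^2$ follows by taking pointwise norms and letting $m\to\infty$. The Laplacian identity is then a direct computation from the definition of the measure-valued Laplacian: for $h$ Lipschitz with bounded support,
\[
\int h\,{\bf\Delta}\Phi({\bf f})=-\int\la\nabla h,\nabla\Phi({\bf f})\ra\,\mm=-\sum_i\int\Phi_i({\bf f})\,\la\nabla h,\nabla f_i\ra\,\mm ,
\]
and since $\Phi_i({\bf f})\in{\rm TestF}(M)\subset W^{1,2}\cap L^\infty$ while ${\bf\Delta}f_i$ is absolutely continuous (as $f_i\in{\rm TestF}(M)$), one may apply the integration by parts defining ${\bf\Delta}f_i$ to the test function $h\,\Phi_i({\bf f})$ — valid after the routine $L^2$-density extension — which produces exactly $\sum_i\int h\,\Phi_i({\bf f})\,{\bf\Delta}f_i+\sum_{i,j}\int h\,\Phi_{ij}({\bf f})\,\la\nabla f_i,\nabla f_j\ra\,\mm$, as claimed.

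The ${\bf\Gamma}_2$ identity is the substantial one, and its backbone is the Leibniz rule
\[
{\bf\Gamma}_2(uv,w)=u\,{\bf\Gamma}_2(v,w)+v\,{\bf\Gamma}_2(u,w)+2\,\H_w(\nabla u,\nabla v)\,\mm,\qquad u,v,w\in{\rm TestF}(M),
\]
together with the companion Leibniz rule for the Hessian expressing $\H_{uv}$ in terms of $\H_u$, $\H_v$ and the symmetrized tensor built from $\d u$ and $\d v$. Both are obtained by plugging the Leibniz rules for ${\bf\Delta}$ and for $\la\nabla\cdot,\nabla\cdot\ra$ into the definitions of ${\bf\Gamma}_2(\cdot,\cdot)$ and $\H_{(\cdot)}$; once the Leibniz corrections have cancelled the lower-order terms, what remains is exactly the combination $\la\nabla u,\nabla\la\nabla v,\nabla w\ra\ra+\la\nabla v,\nabla\la\nabla u,\nabla w\ra\ra-\la\nabla w,\nabla\la\nabla u,\nabla v\ra\ra$, which by its very definition equals $2\,\H_w(\nabla u,\nabla v)$ — spotting and executing this cancellation, while checking that all the products involved sit in the right spaces of functions and measures, is the decisive computation. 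Iterating these rules over monomials, together with the bilinearity of ${\bf\Gamma}_2$ and the resulting Hessian chain rule, gives the ${\bf\Gamma}_2$ formula for every polynomial $P$ with $P(0)=0$, directly in the form involving only $P_i$ and $P_{ij}$. To pass to a general $\Phi$ I would then let $m\to\infty$: on one side the right-hand side of the polynomial identity converges in total variation to the right-hand side of the statement, because its coefficient functions converge uniformly on $Q$ while ${\bf\Gamma}_2(f_i,f_j)$ is a fixed signed measure and $\H_{f_i}(\nabla f_j,\nabla f_k)$ and $\la\nabla f_i,\nabla f_k\ra\la\nabla f_j,\nabla f_l\ra$ are fixed functions in $L^1(M)$ (here one invokes the $L^2$-regularity $\H_{f_i}\in L^2$ available on ${\rm RCD}$ spaces and $|\nabla f_i|\in L^\infty$); on the other side, using the first two identities one gets $|\D P_m({\bf f})|^2\to|\D\Phi({\bf f})|^2$ in $W^{1,2}(M)$ and $\Delta P_m({\bf f})\to\Delta\Phi({\bf f})$ in $L^2(M)$, and then, after an integration by parts that moves the gradient off $\Delta P_m({\bf f})$, one obtains ${\bf\Gamma}_2(P_m({\bf f}))\rightharpoonup{\bf\Gamma}_2(\Phi({\bf f}))$ in the duality with Lipschitz functions of bounded support. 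Identifying the two limits closes the argument.

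The step I expect to be the main obstacle is the ${\bf\Gamma}_2$ part taken as a whole: first, the Leibniz/cancellation computation above, where one has to keep track of several measure-valued and $L^2$-valued quantities and justify each product; and then the passage to the limit, since ${\bf\Gamma}_2$ is measure-valued and fails to be continuous under $W^{1,2}$-convergence of the function together with $L^2$-convergence of its Laplacian, so that one must lean on the closability properties of ${\bf\Gamma}_2$ from \cite{B-H, S-S} and on the uniform integrability of its absolutely continuous part, which is ultimately controlled by the ${\rm RCD}$ Bochner bound of Proposition \ref{becondition}.
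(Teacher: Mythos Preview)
The paper does not supply a proof of this lemma; it is stated as a citation from \cite{B-H} and \cite{S-S}, so there is no in-paper argument to compare against. Your proposal is a sound outline of the approach taken in \cite{S-S}: reduce to polynomials via the Leibniz rules for $\nabla$, ${\bf\Delta}$, $\H$, and ${\bf\Gamma}_2$, then pass to the limit using that ${\bf f}$ takes values in a compact box and that polynomials can be chosen to approximate $\Phi$ in $C^2$ there. The two points you flag as delicate --- verifying the Leibniz identity ${\bf\Gamma}_2(uv,w)=u\,{\bf\Gamma}_2(v,w)+v\,{\bf\Gamma}_2(u,w)+2\,\H_w(\nabla u,\nabla v)\,\mm$ at the measure level, and obtaining weak convergence of ${\bf\Gamma}_2(P_m({\bf f}))$ by integrating by parts to replace $\nabla\Delta P_m({\bf f})$ with $\Delta P_m({\bf f})$ --- are exactly the ones that require work in \cite{S-S}, and your plan for each is correct.
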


At the end of this section, we discuss  the  dimension of $M$ which is understood as the dimension of  $L^2(TM)$  as a $L^\infty$-module.
Let $A$ be a Borel set. We denote the subset of $L^2(TM)$ consisting of those $v$ such that $\nchi_{A^c} v=0$ by  $L^2(TM)\restr{A}$.

\begin{definition}[Local independence] Let $A$ be a Borel set with positive measure. We say that $\{v_i\}_{i=1}^n \subset L^2(TM)$ is independent on $A $ if
\[
\sum_i f_i v_i = 0, ~~\mm-\text {a.e.} ~~\text{on}~~ A
\]
holds if and only if $f_i = 0$  $\mm$-a.e. on $A$ for each $i$.

\end{definition}

\begin{definition}[Local span and generators] Let $A$ be a Borel set in $X$ and $V:=\{v_i\}_{i \in I} \subset L^2(TM)$. The span of $V$ on $A$, denoted by ${\rm Span}_A(V)$, is the subset of $L^2(TM)\restr{A}$  with the following property: there exist a Borel decomposition $\{A_n\}_{n\in \mathbb{N}}$ of $A$ and families of vectors $\{v_{i,n}\}_{i=1}^{m_n} \subset L^2(TM)$ and functions $\{f_{i,n}\}_{i=1}^{m_n} \subset L^\infty(M)$, $n=1,2,...,$ such that
\[
\nchi_{A_n}  v=\mathop{\sum}_{i=1}^{m_n} f_{i,n} v_{i,n}
\]
for each $n$.
We call the closure of ${\rm Span}_A(V)$  the space generated by $V$ on $A$.
\end{definition}

We say that $L^2(TM)$ is finitely generated if there is a finite family $v_1,...,v_n$ spanning $L^2(TM)$ on $X$, and locally finitely generated if there is a partition $\{E_i\}$ of $X$ such that $L^2(TM)\restr{E_i}$ is finitely generated for every $i \in \mathbb{N}$.

\begin{definition}[Local basis and dimension] We say that a finite set $v_1,...,v_n$ is a basis on a Borel set $A$ if it is independent on $A$ and ${\rm Span}_A\{v_1,...,v_n\} = L^2(TM)\restr{A}$.
If $L^2(TM)$ has a basis of cardinality $n$ on $A$, we say that it has dimension $n$ on A, or that its local dimension on $A$ is $n$.
If $L^2(TM)$ does not admit any local basis of finite cardinality on  any subset of $A$ with positive measure, we say that $L^2(TM)$ has infinite dimension on $A$.
\end{definition}

It can be proved  (see Proposition 1.4.4 in \cite{G-N} for example) that the definition of basis and dimension are well posed.
As a consequence of  this definition, we can prove the existence of a unique decomposition $\{ E_n\}_{n \in \mathbb{N} \cup \{\infty\}}$ of $X$ such that for each $E_n$ with positive measure, $n \in \mathbb{N} \cup \{\infty\}$, $L^2(TM)$ has dimension $n$ on $E_n$. Furthermore, thanks to the infinitesimal Hilbertianity we have  the following proposition.

\begin{proposition}[Theorem 1.4.11, \cite{G-N}]\label{decomposition}
Let $\ms$ be a $\rcd$ metric measure space. Then there exists  a unique decomposition $\{ E_n\}_{n \in \mathbb{N} \cup \{\infty\}}$ of $X$ such that
\begin{itemize}
\item For any $n \in \mathbb{N}$ and any $B \subset E_n$ with finite positive measure,  $L^2(TM)$ has a unit orthogonal basis $\{e_{i,n}\}_{i=1}^n$ on $B$,
\item For every subset $B$ of $E_\infty$ with finite positive measure, there exists a unit orthogonal set $\{e_{i,B}\}_{i \in \mathbb{N} \cup \{\infty\}} \subset L^2(TM)\restr{B}$  which generates $L^2(TM)\restr{B}$,
\end{itemize}
where unit orthogonal of a countable set $\{v_i\}_i\subset L^2(TM)$ on $B$ means $\la v_i, v_j \ra=\delta_{ij}$ $\mm$-a.e. on $B$.
\end{proposition}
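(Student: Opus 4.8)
The plan is to reduce the statement to a module-valued Gram--Schmidt construction, carried out separately on the finite strata and on the infinite one. The Borel partition $\{E_n\}_{n\in\mathbb N\cup\{\infty\}}$ on which $L^2(TM)$ has local dimension exactly $n$ (and with $L^2(TM)\restr{E_\infty}$ not finitely generated on any subset of positive measure) follows readily from the well-posedness of local dimension, Proposition 1.4.4 in \cite{G-N}: starting from a countable family $\{\nabla f_k\}$ generating $L^2(TM)$, one lets $E_n$ be (a representative of) the set of points at which the largest cardinality of a locally independent subfamily equals $n$, and $E_\infty$ the complement of $\bigcup_n E_n$. Uniqueness of the partition is then immediate: if $\{E'_m\}$ is another decomposition with the stated properties and $\mm(E_n\cap E'_m)>0$, then $L^2(TM)\restr{E_n\cap E'_m}$ would carry bases of cardinality both $n$ and $m$, forcing $n=m$ by Proposition 1.4.4, so the two partitions agree up to $\mm$-null sets. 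Thus it remains to produce, on each piece of finite positive measure, the unit orthogonal (generating) families.

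For the finite strata, fix $n\in\mathbb N$ and $B\subset E_n$ with $0<\mm(B)<\infty$, and choose a basis $v_1,\dots,v_n$ of $L^2(TM)\restr B$. I would orthonormalize by induction: given unit orthogonal $e_1,\dots,e_{j-1}$ generating the same submodule as $v_1,\dots,v_{j-1}$, put $w_j=\nchi_B v_j-\sum_{i<j}\la v_j,e_i\ra e_i$. Since $\{|w_j|=0\}\cap B$ must be $\mm$-null (otherwise $v_1,\dots,v_j$ would fail to be independent there), one splits $B$ into the countably many Borel sets $B_\ell=B\cap\{(\ell+1)^{-1}\le |w_j|<\ell^{-1}\}$, sets $e_j:=|w_j|^{-1}w_j$ on each $B_\ell$, and glues; the result has pointwise norm $1$ on $B$ and, as $\mm(B)<\infty$, lies in $L^2(TM)\restr B$. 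The same localized identity exhibits $v_j$ as an element of the submodule generated by $e_1,\dots,e_j$, closing the induction and producing the unit orthogonal basis $\{e_{i,n}\}_{i=1}^n$ on $B$.

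For the infinite stratum, take $B\subset E_\infty$ with $0<\mm(B)<\infty$ and run the same procedure on the whole countable family $\{\nchi_B\nabla f_k\}_k$, where $\{f_k\}\subset{\rm TestF}(M)$ is $W^{1,2}$-dense (so that $\{\nabla f_k\}$ generates $L^2(TM)$): at each step one discards the piece of $B$ on which the current residual vanishes---there it adds no new direction---and renormalizes on the level sets of the residual's pointwise norm as before. The outcome is a countable unit orthogonal family $\{e_{i,B}\}_{i\in\mathbb N\cup\{\infty\}}\subset L^2(TM)\restr B$ that generates $L^2(TM)\restr B$, since by construction each $\nchi_B\nabla f_k$ is a finite combination of the $e_{i,B}$ and the $\nabla f_k$ generate.

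I expect the main obstacle to be exactly these orthonormalization steps: carrying out Gram--Schmidt honestly in the $L^\infty$-module setting forces the repeated Borel decomposition needed to invert pointwise norms that degenerate on sets of positive measure, together with the measurable bookkeeping required to patch the locally normalized vectors into genuine global elements of $L^2(TM)\restr B$, and---in the infinite case---a careful check that no direction is lost, so that the resulting orthonormal system generates all of $L^2(TM)\restr B$ rather than merely a proper closed submodule.
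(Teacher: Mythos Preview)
The paper does not prove this proposition: it is stated as Theorem~1.4.11 of \cite{G-N} and quoted without proof, serving only as background for the dimension arguments in Section~3. There is therefore no ``paper's own proof'' to compare against.

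That said, your sketch is essentially the standard argument one finds in \cite{G-N}: existence and uniqueness of the dimensional decomposition via well-posedness of local dimension, followed by a module-valued Gram--Schmidt on each stratum, with the usual care about inverting pointwise norms through Borel level-set decompositions. Two small points worth tightening if you flesh this out. First, your construction of $E_n$ as ``the set of points at which the largest cardinality of a locally independent subfamily equals $n$'' needs to be made rigorous at the level of equivalence classes of Borel sets rather than points; the cleanest route is to define $E_{\ge n}$ as the essential union of Borel sets on which some $n$-tuple from the countable generating family is independent, and then set $E_n=E_{\ge n}\setminus E_{\ge n+1}$. Second, in the infinite case you should verify that after discarding the vanishing pieces at each step, the procedure still exhausts $B$ in the sense that the orthonormal family generates on all of $B$ and not merely on the (possibly shrinking) sets where normalization succeeded; this is where the bookkeeping you flag as the ``main obstacle'' actually bites.
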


\begin{definition}[Analytic Dimension]
We say that the dimension of $L^2(TM)$ is $k$ if $k=\sup \{n: \mm(E_n)>0\}$ where $\{ E_n\}_{n \in \mathbb{N} \cup \{\infty\}}$ is the decomposition given in Proposition \ref{decomposition}. We define the analytic dimension of $M$ as the dimension of $L^2(TM)$ and denote it by $\dim_{\rm max} M$.
\end{definition}


\section{Improved Bochner inequality}
In this part, we will study the dimension of $\rcdkn$ metric measure spaces and prove an improved Bochner inequality.

First of all, we have a lemma.

\begin{lemma}[Lemma 3.3.6, \cite{G-N}]\label{lemma-qf}
Let $\mu_i=\rho_i \,\mm+\mu_i^s, i=1,2,3$  be measures with $\mu_i^s \perp \mm$.  We assume that
\[
\lambda^2 \mu_1+2\lambda \mu_2 + \mu_3 \geq 0, ~~~~~\forall \lambda \in \R.
\]
Then we have
\[
\mu_1^s \geq 0,~~~\mu_3^s \geq 0
\]
and
\[
|\rho_2|^2 \leq \rho_1 \rho_3,~~~\mm-\text {a.e.}.
\]
\end{lemma}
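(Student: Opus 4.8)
The plan is to decouple the hypothesis into its absolutely continuous and its singular parts with respect to $\mm$, and then on each part use only the elementary fact that a nonnegative quadratic polynomial in one real variable has nonnegative leading coefficient, nonnegative constant term, and nonpositive discriminant. Concretely, for each $\lambda\in\R$ the measure $\lambda^2\mu_1+2\lambda\mu_2+\mu_3$ has absolutely continuous part $(\lambda^2\rho_1+2\lambda\rho_2+\rho_3)\,\mm$ and singular part $\lambda^2\mu_1^s+2\lambda\mu_2^s+\mu_3^s$, the latter being singular because $\mu_1^s,\mu_2^s,\mu_3^s$ are all concentrated on a common $\mm$-negligible Borel set. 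Since a nonnegative measure has nonnegative absolutely continuous and singular parts (immediate from the defining property of its singular set, or from uniqueness of the Lebesgue decomposition), the assumption yields, for every $\lambda\in\R$,
\[
(\lambda^2\rho_1+2\lambda\rho_2+\rho_3)\,\mm\ \ge\ 0
\qquad\text{and}\qquad
\lambda^2\mu_1^s+2\lambda\mu_2^s+\mu_3^s\ \ge\ 0 .
\]

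Next I would handle the singular parts. Testing the second inequality against an arbitrary Borel set $A$, the real number $\lambda^2\mu_1^s(A)+2\lambda\mu_2^s(A)+\mu_3^s(A)$ is nonnegative for all $\lambda$, hence its leading coefficient $\mu_1^s(A)$ and its constant term $\mu_3^s(A)$ are nonnegative. As $A$ is arbitrary, this is exactly $\mu_1^s\ge 0$ and $\mu_3^s\ge 0$.

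For the density inequality I would argue as follows. For each fixed $\lambda\in\Q$, the first inequality above gives $\lambda^2\rho_1+2\lambda\rho_2+\rho_3\ge 0$ outside some $\mm$-null set; taking the countable union of these null sets over $\lambda\in\Q$ produces a single $\mm$-null set $N$ with the property that, for every $x\notin N$, the polynomial $\lambda\mapsto\lambda^2\rho_1(x)+2\lambda\rho_2(x)+\rho_3(x)$ is nonnegative at every rational $\lambda$, hence at every real $\lambda$ by continuity. Consequently its discriminant is nonpositive, i.e. $|\rho_2(x)|^2\le\rho_1(x)\rho_3(x)$ for every $x\notin N$ (and, as a byproduct, $\rho_1,\rho_3\ge 0$ $\mm$-a.e.), which is the asserted a.e. bound.

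The one point that needs care is exactly this last manoeuvre: the exceptional set coming from the a.c. inequality a priori depends on $\lambda$, and it has to be made uniform before one can freely vary $\lambda$ at a fixed point $x$. Passing to the countable dense set $\Q$ and then invoking continuity in $\lambda$ is what makes this rigorous; everything else is routine bookkeeping with the Lebesgue decomposition and the sign conditions for real quadratic forms.
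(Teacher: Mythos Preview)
Your argument is correct. The paper does not actually give a proof of this lemma: it is quoted verbatim as Lemma~3.3.6 of \cite{G-N} and used as a black box, so there is nothing in the present paper to compare against. What you wrote is precisely the standard proof one finds for this statement: split into absolutely continuous and singular parts via the Lebesgue decomposition (noting that the three singular parts live on a common $\mm$-null set, so their linear combination is still singular), use positivity of both parts, and then pass from ``for each $\lambda$, a.e.\ $x$'' to ``for a.e.\ $x$, all $\lambda$'' by restricting to a countable dense set of $\lambda$'s. The only subtlety is exactly the one you flagged, and you handled it correctly; the case analysis $\rho_1(x)>0$, $\rho_1(x)=0$ underlying the discriminant conclusion is implicit but routine.
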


Now we prove that $N$ is an upper bound of the dimensions of $\rcdkn$ spaces.

\begin{proposition}\label{finite dim}
Let $M=\ms$ be a $\rcdkn$ metric measure space. Then ${\dim}_{\rm max} M \leq N$. Furthermore, if the local dimension on a Borel set $E$ is $N$, we have
$\tr \H_f(x)=\Delta f (x)$ $\mm$-a.e. $x \in E$ for every $f \in {\rm TestF}$.

\end{proposition}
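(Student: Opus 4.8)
\emph{Strategy: Bakry--Émery self-improvement.} The plan is to feed the self-improved Bochner inequality of Lemma~\ref{self improve} with $\Phi(\mathbf f)$, $\mathbf f=(f_1,\dots,f_n)\in {\rm TestF}(M)^n$, and to exploit the freedom in $\Phi$. Expanding ${\bf\Gamma}_2(\Phi(\mathbf f))$, $|\D\Phi(\mathbf f)|^2$ and $\Delta\Phi(\mathbf f)$ by the chain rules of Lemma~\ref{chain}, writing ${\bf\Gamma}_2(f_i,f_j)=\gamma_2^{ij}\mm+(\cdots)^{\perp}$, and discarding the singular parts of the ${\bf\Gamma}_2(f_i,f_j)$ (which, by Lemma~\ref{lemma-qf} applied to $\Phi$ linear, are non-negative and hence only help the absolutely continuous inequality), one arrives, for each fixed $\Phi$, at an $\mm$-a.e.\ inequality. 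Running this for the countable family of quadratic polynomials $\Phi_{b,C}(z)=b\cdot z+\tfrac12 z^{t}Cz$ with rational $b,C$ and using continuity in $(b,C)$, I obtain that for $\mm$-a.e.\ $x$, for \emph{all} $a\in\R^n$ and all symmetric $S\in{\rm Sym}(n)$,
\begin{align*}
\sum_{i,j}a_ia_j\,\gamma_2^{ij}&+2\sum_{i,j,k}a_iS_{jk}\,\H_{f_i}(\nabla f_j,\nabla f_k)+\tr\big((gS)^2\big)\\
&\geq K\sum_{i,j}a_ia_j\,g_{ij}+\frac1N\Big(\sum_i a_i\,\Delta f_i+\tr(gS)\Big)^2,
\end{align*}
where $g=(g_{ij})=(\la\nabla f_i,\nabla f_j\ra)$ and I used $\sum_{i,j,k,l}S_{ij}S_{kl}g_{ik}g_{jl}=\tr((gS)^2)$; here $(a,S)$ incarnates $(\nabla\Phi(\mathbf f),\nabla^2\Phi(\mathbf f))$.

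\emph{Step 1: $\dim_{\rm max}M\leq N$.} We may assume $N<\infty$. Since $\{\nabla f:f\in{\rm TestF}(M)\}$ generates $L^2(TM)$ (${\rm TestF}(M)$ being $W^{1,2}$-dense), on a subset $B$ of finite positive measure of each $E_n$ ($n\in\N$) there are $f_1,\dots,f_n\in{\rm TestF}(M)$ whose gradients form a basis on $B$ (Proposition~\ref{decomposition} and the local basis results of \cite{G-N}); then $g(x)$ is invertible for $\mm$-a.e.\ $x\in B$. Taking $a=0$ and $S=g(x)^{-1}$ in the inequality above yields $n-\tfrac1N n^2\geq0$, i.e.\ $n\leq N$. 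Likewise, if $\mm(E_\infty)>0$ one could take arbitrarily many test functions with linearly independent gradients on a positive-measure set and get $n\le N$ for every $n$, a contradiction. Hence $\mm(E_n)=0$ for $n>N$ and $\mm(E_\infty)=0$, so $\dim_{\rm max}M\leq N$.

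\emph{Step 2: the trace identity.} Assume now the local dimension on $E$ equals $N$, so $N=:n\in\N$. Fix $f\in{\rm TestF}(M)$. On $E\cap\{|\D f|>0\}$, cover (up to an $\mm$-null set) by sets $B$ of finite positive measure on which $\nabla f$ extends to a basis $\nabla f=\nabla f_1,\nabla f_2,\dots,\nabla f_n$ of test-function gradients; there $g$ is invertible. At an $\mm$-a.e.\ fixed $x\in B$ substitute $T:=g^{1/2}Sg^{1/2}$, which ranges over all of ${\rm Sym}(n)$ as $S$ does; then $\tr(gS)=\tr T$, $\tr((gS)^2)=\tr(T^2)$, $\sum_{i,j,k}a_iS_{jk}\H_{f_i}(\nabla f_j,\nabla f_k)=\tr\big(T\,\widetilde H(a)\big)$ with $\widetilde H(a):=g^{-1/2}\big(\sum_i a_i H^i\big)g^{-1/2}$, $H^i:=(\H_{f_i}(\nabla f_j,\nabla f_k))_{jk}$, and the $T$-quadratic part becomes $\tr(T^2)-\tfrac1n(\tr T)^2$, which is positive semidefinite with kernel $\R\,\Id$. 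Restricting to $T=t\,\Id$, $t\in\R$, the left minus right side of the inequality becomes affine in $t$ (the $t^2$-terms cancel because $N=n$), so its coefficient of $t$ must vanish; this reads $\sum_i a_i\big(\tr(g^{-1}H^i)-\Delta f_i\big)=0$, and since $a$ is arbitrary, $\tr(g^{-1}H^i)=\Delta f_i$ for each $i$. But $\tr(g^{-1}H^i)$ is exactly the (basis-independent) trace $\tr\H_{f_i}$ with respect to the basis $\{\nabla f_j\}$; in particular $\tr\H_f=\Delta f$ $\mm$-a.e.\ on $B$. On $E\cap\{|\D f|=0\}$, locality of the weak gradient, of the Hessian and of the Laplacian forces $\tr\H_f=0=\Delta f$ $\mm$-a.e. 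Combining the two parts proves the claim.

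\emph{Main obstacle.} The delicate step is the reduction, in Steps~1 and~2, from the abstract module-dimension decomposition of Proposition~\ref{decomposition} --- which provides an orthonormal \emph{module} basis of $L^2(TM)$, not one realized by gradients of test functions --- to concrete tuples $(f_1,\dots,f_n)\subset{\rm TestF}(M)$ whose gradients form a pointwise-a.e.\ basis on a positive-measure piece, and, in Step~2, to such a tuple with a \emph{prescribed} first member $f$ on $\{|\D f|>0\}$; this rests on the generation of $L^2(TM)$ by test-function gradients together with the structure theory of $L^2$-normed modules in \cite{G-N}. The remaining technical care is the passage from the measure inequality to the pointwise-for-all-$(a,S)$ statement (separability, continuity, and disposal of the non-negative singular parts), and the locality argument in the $|\D f|=0$ case.
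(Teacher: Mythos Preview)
Your overall strategy --- feed quadratic $\Phi$ into the self-improved Bochner inequality, expand by the chain rules, and optimize over the free parameters --- is the same as the paper's. The execution differs in one structural point that generates both of the difficulties you flag. The paper keeps the ``target'' function $f$ (whose Hessian/Laplacian are being compared) \emph{separate} from the ``direction'' functions $h_1,\dots,h_n$: its test function is $\Phi(f,g,h_1,\dots,h_n)$, and after freezing the constants it obtains the discriminant inequality
\[
\Big|\sum_i\big(\H_f(\nabla h_i,\nabla h_i)-\tfrac{\Delta f}{N}|\D h_i|^2\big)\Big|^2
\le\Big(\gamma_2(f)-K|\D f|^2-\tfrac{(\Delta f)^2}{N}\Big)\Big(\sum_{i,j}|\la\nabla h_i,\nabla h_j\ra|^2-\tfrac{(\sum_i|\D h_i|^2)^2}{N}\Big).
\]
Since only the $\nabla h_i$ (not the $h_i$ themselves) appear here, one can pass by $L^2(TM)$-density from gradients of test functions to an arbitrary module orthonormal basis $e_{1},\dots,e_{n}$ on $E_n$; plugging these in gives $n\le N$ directly, and when $n=N$ the right-hand factor vanishes, forcing $\tr\H_f=\Delta f$ for \emph{every} test $f$ without ever needing $\nabla f$ to sit inside a basis. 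By contrast, you use a single tuple $(f_1,\dots,f_n)$ for both roles, which forces you to (i) realize a module basis by actual test-function gradients, and (ii) in Step~2, extend the prescribed $\nabla f$ to such a basis on $\{|\D f|>0\}$ and handle $\{|\D f|=0\}$ separately.

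Point (i) can indeed be filled in along the lines you sketch, but point (ii) has a real gap: the assertion ``locality of the Laplacian forces $\Delta f=0$ on $\{|\D f|=0\}$'' is \emph{not} the standard locality statement (which says $f=g$ on $A\Rightarrow\Delta f=\Delta g$ on $A$) and does not follow from it; you would need a separate argument that $|\D f|=0$ a.e.\ on $A$ implies $\Delta f=0$ a.e.\ on $A$ in the ${\rm RCD}$ setting, which is not immediate from the integration-by-parts definition. The simplest fix is to adopt the paper's separation of roles: carry an extra slot $f_0=f$ with $a=(1,0,\dots,0)$ and let $S$ act only on the indices $1,\dots,n$; then your affine-in-$t$ argument with $T=t\,\Id$ (on the $n\times n$ block) yields $\tr(g^{-1}H^0)=\Delta f$ directly, with no case split and no need for $\nabla f$ to be nonzero.
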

\begin{proof}
Let  $\{E_m\}_{m \in \mathbb{N} \cup \{\infty\}}$ be the partition of $X$ given by Proposition \ref{decomposition}. To prove ${\dim}_{\rm max} M \leq N$, it is sufficient to prove that for any $E_m$ with positive measure, we have $m \leq N$.

Then, let $m \in \mathbb{N} \cup \{\infty\}$ be such that $\mm(E_m)>0$, and $n \leq m$ a finite number. We define the function $\Phi(x, y, z_1, ... ,z_{n}):=\lambda (xy+ x)-by + \sum_i^{n} (z_i-c_i)^2-\sum_i^n c_i^2$ where $ \lambda,  b, c_i \in \R$. Then  we have
\begin{eqnarray*}
\Phi_{x,i}&=&0,~~~~~~~~~~~~~\Phi_{y,i}= 0, ~~~~~~~~~~~~~~~~\Phi_{i,j}= 2 \delta_{ij},~~~~~~~~~~~~~\Phi_{x,y}=\lambda\\
\Phi_x &=& \lambda y+\lambda,~~~~~~~~\Phi_y = \lambda x-b,~~~~~~~~~~\Phi_i =2(z_i-c_i).
\end{eqnarray*}

From Lemma \ref{self improve} we know
\[
{\bf \Gamma}_2( \Phi({\bf f})) \geq \left ( K|\D \Phi({\bf f})|^2 + \frac{(\Delta \Phi({\bf f}))^2 }{N} \right )\, \mm
\]
for any ${\bf f}= (f, g, h_1,...,h_n)$ where $f, g, h_1, ..., h_n \in {\rm TestF}$.

Combining the chain rules (see Lemma \ref{chain}), the inequality above becomes:
\begin{equation}\label{eqn-1}
{\bf A}(\lambda, b, {\bf c}) \geq \Big {(} KB(\lambda, b, {\bf c})+\frac1N C^2(\lambda, b, {\bf c}) \Big{)}\,\mm,
\end{equation}
where
\begin{eqnarray*}
{\bf A}(\lambda, b, {\bf c})&=& (\lambda f-b)^2 {\bf \Gamma}_2(g)+(\lambda g+\lambda)^2 {\bf \Gamma}_2(f)+\mathop{\sum}_{i,j} 4(h_i-c_i)(h_j-c_j){\bf \Gamma}_2(h_i,h_j)\\
&+& 2\lambda (g+1)(\lambda f-b){\bf \Gamma}_2(f,g)+\mathop{\sum}_i 4(\lambda g+\lambda)(h_i-c_i){\bf \Gamma}_2(f,h_i)\\ &+& \mathop{\sum}_i 4(\lambda f-b)(h_i-c_i){\bf \Gamma}_2(g,h_i) + 8\lambda \mathop{\sum}_i(h_i-c_i)\H_{h_i}(\nabla f, \nabla g)\,\mm \\ &+& 4\mathop{\sum}_i(\lambda g+\lambda) \H_f(\nabla h_i, \nabla h_i)\,\mm+4\mathop{\sum}_i(\lambda f-b) \H_g(\nabla h_i, \nabla h_i)\,\mm \\
&+&  4\lambda(\lambda f-b)\H_g(\nabla f, \nabla g)\,\mm+4\lambda (\lambda g+\lambda)\H_f(\nabla f,\nabla g)\,\mm\\ &+&8\mathop{\sum}_{i,j}(h_i-c_i)\H_{h_i}(\nabla h_j, \nabla h_j)\,\mm
+2 \lambda^2|\D f|^2 |\D g|^2\,\mm+2\lambda^2 |\la \nabla f, \nabla g \ra|^2\,\mm \\ &+& 4\mathop{\sum}_{i,j} |\la \nabla h_i, \nabla h_j \ra|^2\,\mm+
8\lambda \mathop{\sum}_i \la \nabla f, \nabla h_i \ra \la \nabla g, \nabla h_i \ra\,\mm
\\
B(\lambda, b, {\bf c})&=& (\lambda f-b)^2 |\D g|^2 +(\lambda g+\lambda)^2 |\D f|^2\\
&+&2(\lambda g+\lambda)(\lambda f-b) \la \nabla f, \nabla g \ra+4\mathop{\sum}_i (\lambda g+\lambda)(h_i-c_i) \la \nabla f, \nabla h_i\ra \\ &+&4\mathop{\sum}_i(\lambda f-b)(h_i-c_i)\la \nabla g, \nabla h_i\ra + 4\mathop{\sum}_{i,j}(h_i-c_i)(h_j-c_j) \la \nabla h_i, \nabla h_j \ra\\
C(\lambda, b, {\bf c})&=& (\lambda g+\lambda) \Delta f+(\lambda f-b) \Delta g+2\mathop{\sum}_i (h_i-c_i)\Delta h_i \\ &+& 2\lambda \la \nabla f, \nabla g \ra+2\mathop{\sum}_i |\D h_i|^2.
\end{eqnarray*}

Let $B$ be an arbitrary Borel set. From the inequality \eqref{eqn-1} we know
\[
\nchi_B{\bf A}(\lambda, b, {\bf c}) \geq \Big{(}K\nchi_BB(\lambda, b, {\bf c})+ \frac1N \nchi_B C^2(\lambda, b, {\bf c}) \Big{)}\,\mm.
\]
Combining this observation and the linearity of ${\bf A}, B, C$ with respect to $b$, we can replace the constant $b$ in \eqref{eqn-1} by an arbitrary simple function.  Pick a sequence of simple functions $\{b_n\}_n$ such that $b_n \to \lambda f$ in $L^\infty(M)$. Since ${\bf \Gamma}_2(f,g)$ and $\H_f(\nabla g, \nabla h)\,\mm$ have finite total variation for any $f, g, h \in {\rm TestF}$,  we can see that ${\bf A}(\lambda, b_n, {\bf c})$, $B(\lambda, b_n, {\bf c})\mm$,  $C^2(\lambda, b_n, {\bf c})\mm$  converge to ${\bf A}(\lambda, \lambda f, {\bf c})$, $B(\lambda, \lambda f, {\bf c})\mm$, $C^2(\lambda, \lambda f, {\bf c})\mm$ respectively with respect to the total variation norm $\| \cdot \|_{\rm TV}$. Therefore, we can replace $b$ in \eqref{eqn-1} by $\lambda f$. For the same reason, we can replace  $c_i$ by $h_i$. Then we obtain the following inequality.

\begin{equation}\label{eqn-2}
{\bf A}'(\lambda) \geq \Big{(} KB'(\lambda)+ \frac1N (C')^2(\lambda)\Big{)}\,\mm,
\end{equation}
where
\begin{eqnarray*}
{\bf A}'(\lambda)&=&  4\mathop{\sum}_i(\lambda g+\lambda) \H_f(\nabla h_i, \nabla h_i)\,\mm + 2\lambda^2|\D f|^2 |\D g|^2\,\mm +2\lambda^2 |\la \nabla f, \nabla g \ra|^2\,\mm  \\ &+& 4\mathop{\sum}_{i,j} |\la \nabla h_i, \nabla h_j \ra|^2\,\mm  +
8\lambda \mathop{\sum}_i \la \nabla f, \nabla h_i \ra \la \nabla g, \nabla h_i \ra\,\mm + (\lambda g+\lambda)^2 {\bf \Gamma}_2(f)\\
&+& 4\lambda (\lambda g+\lambda)\H_f(\nabla f,\nabla g)
\\
B'(\lambda)&=&  (\lambda g+\lambda)^2 |\D f|^2\\
C'(\lambda)&=&  (\lambda g+\lambda) \Delta f+ 2\lambda \la \nabla f, \nabla g \ra+2\mathop{\sum}_i |\D h_i|^2.
\end{eqnarray*}

It can be seen that $\H_f(\nabla f,\nabla g)=\frac12 \la \nabla |\D f|^2, \nabla g \ra$. Therefore, all the terms in ${\bf A}', B'\,\mm$ and $C'\,\mm$ vary continuously w.r.t. $\|\cdot \|_{\rm TV}$ as $g$ varies in $W^{1,2}(M)$. Hence the inequality \eqref{eqn-2} holds for any Lipschitz function $g$ with bounded support.
In particular, we can pick $g$ identically 1 on some bounded set $\Omega \subset X$, so that we have $|\D g| = 0$ and $\H_f (\nabla f, \nabla g) = 0$ $\mm$-a.e. on $\Omega$. By the arbitrariness of $\Omega$ we can replace $g$ by ${\bf 1}$ which is the function identically equals to 1 on $X$. Then the inequality \eqref{eqn-2} becomes:

\begin{eqnarray*}
&&\lambda^2 {\bf \Gamma}_2(f)+\big {(} 2\lambda \sum_i \H_f(\nabla h_i, \nabla h_i)+\sum_{i,j} |\la \nabla h_i, \nabla h_j \ra |^2-K\lambda^2  |\D f|^2 \big{)} \mm \\
&-& \Big{(} \lambda^2 \frac {(\Delta f)^2}N+ 2\lambda \frac{\Delta f}{N}|\D h_i|^2+ \frac{(\sum_i  |\D h_i|^2)^2}{N} \Big{)} \mm \geq 0.
\end{eqnarray*}

Let $\gamma_2(f) \,\mm$ be the absolutely continuous  part of ${\bf \Gamma}_2(f)$.  By Lemma \ref{lemma-qf} we have the inequality
\begin{eqnarray*}
&&\left |\sum_i \Big{(}\H_f(\nabla h_i, \nabla h_i)- \frac{\Delta f}{N}|\D h_i|^2 \Big{)}\right |^2 \\&\leq& \Big{(} \gamma_2(f)-K|\D f|^2 -\frac {(\Delta f)^2}N\Big{)}\Big{(}\sum_{i,j}|\la \nabla h_i, \nabla h_j \ra |^2 -\frac{(\sum_i  |\D h_i|^2)^2}{N}\Big{)}.
\end{eqnarray*}

In particular, since $\gamma_2(f)-K|\D f|^2 -\frac {(\Delta f)^2}N \geq 0$ (by Proposition \ref{becondition}), we have
\[
\sum_{i,j}|\langle \nabla h_i, \nabla h_j \rangle |^2  \geq \frac{(\sum_i |\D h_i|^2)^2}{N},~~~~~\mm-\text{a.e.} .
\]
This inequality remains true if  we  replace $\nabla h_i$ by $v:=\sum_k \nchi_{A_k}  \nabla f_k$ where $f_k$ are test functions and $A_k$ are disjoint Borel sets. Therefore by density we can replace $\{\nabla h_i \}_1^{n}$ by any $\{e_{i,m}\}_{i=1}^{n}$ which is a unit  orthogonal subset of $L^2(TM)\restr{E_m}$, whose existence is guaranteed by Proposition \ref{decomposition} and the choice of $m,n, E_m$ at the beginning of the proof. Then we obtain

\[
n=\sum_{i,j}|\langle  e_{i,m},  e_{j,m} \rangle |^2  \geq \frac{(\sum_i |e_{i,m}|^2)^2}{N}=\frac{n^2}N,~~~~~\mm-\text{a.e.}~\text{on}~~E_m,
\]
which implies   $n\leq N$ on $E_m$. Since the finite integer $n  \leq m$ was chosen  arbitrarily, we deduce $m \leq N$. Furthermore, if $E_N$ has positive measure, we obtain
\[
\left |\sum_{i=1}^N \H_f(e_{i, N},e_{i, N})- \sum_{i=1}^N \frac{\Delta f}{N}|e_{i,N}|^2 \right |=0,~~~~~\mm-\text{a.e.~on}~  E_N,
\]
where $\{e_{i,N}\}_{i=1}^{N}$  is a unit  orthogonal basis on $E_N$. This is the same as to say that $\tr \H_f=\Delta f$,  $\mm$-a.e. on $ E$.

\end{proof}

According to this proposition, on  $\rcdkn$ spaces,  we can see that the pointwise Hilbert-Schmidt norm $|T |_{\rm HS}$  of a $L^\infty$-bilinear map $T:[L^2(TM)]^2 \mapsto L^0(M)$ can be defined in the following  way.  We denote $\dim_{\rm loc}:M \mapsto \mathbb{N}$ as the local dimension which is  defined as $\dim_{\rm loc}(x)=n$ on $E_n$, where $\{E_n\}_{n \in \mathbb{N} \cup \{\infty\}}$ is the partition of $X$ in Proposition \ref{decomposition}. Let $T_1, T_2 :[L^2(TM)]^2 \mapsto L^0(M)$ be symmetric bilinear maps, we define $\la T_1, T_2 \ra_{\rm HS}$ as a function such that $\la T_1, T_2 \ra_{\rm HS}:=\sum_{i,j}T_1(e_{i,n}, e_{j,n})T_2(e_{i,n}, e_{j,n})$, $\mm$-a.e. on $E_n$,  where $\{E_n\}_{n\leq N}$ is the partition of $X$ in Proposition \ref{decomposition} and $\{e_{i,n}\}_i, n=1,..., \lfloor N \rfloor$ are the corresponding unit orthogonal  basis. Clearly, this definition is well posed.  In particular, we define the Hilbert-Schmidt norm of $T_1$ by $\sqrt{\la T_1, T_1 \ra_{\rm HS} }$ and denote it by $|T_1|_{\rm HS}$,  and the trace of $T_1$ can be written as $\tr T_1= \la T_1, \Id_{\dim_{\rm loc}} \ra_{\rm HS}$ where  $\Id_{\dim_{\rm loc}}$ is the unique  map satisfying $\Id_{\dim_{\rm loc}}(e_{i,n},
e_{j,n})=\delta_{ij}$, $\mm$-a.e. on $E_n$.

In the following theorem, we prove an improved Bochner inequality.

\begin{theorem}\label{th}
Let $M=\ms$ be a $\rcdkn$ metric measure space. Then
\begin{eqnarray*}
{\bf \Gamma}_2(f) &\geq& \Big{(} K|\D f|^2+|\H_f|^2_{\rm HS}+\frac1{N-\dim_{\rm loc}}(\tr \H_f -\Delta f)^2 \Big{)} \, \mm
\end{eqnarray*}
holds for any $f \in {\rm TestF}$, where $\frac1{N-\dim_{\rm loc}}(\tr \H_f -\Delta f)^2 $ is taken $0$  by definition on the set $\{x: \dim_{\rm loc}(x)=N\}$.
\end{theorem}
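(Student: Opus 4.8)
\emph{Proof strategy.} This is the metric--measure, matrix-valued form of Bakry's self-improvement, and I would prove it by the method already used for Proposition \ref{finite dim}. First, since Proposition \ref{becondition} gives ${\bf \Gamma}_2(f)\ge\big(K|\D f|^2+\tfrac1N(\Delta f)^2\big)\,\mm$, the singular part of ${\bf \Gamma}_2(f)$ is nonnegative; writing $\gamma_2(f)\,\mm$ for its absolutely continuous part and $\{E_n\}$ for the partition of $X$ of Proposition \ref{decomposition} (only the indices $1\le n\le\lfloor N\rfloor$ carry positive mass, since $\dim_{\rm max}M\le N$), it suffices to establish
$$\gamma_2(f)\ \ge\ K|\D f|^2+|\H_f|^2_{\rm HS}+\frac1{N-n}\big(\tr \H_f-\Delta f\big)^2\qquad\mm\text{-a.e. on }E_n$$
for each such $n$, with the dimensional term read as $0$ when $n=N$; summing over $n$ and restoring the (nonnegative) singular part then gives the theorem.

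Fix such an $n$, a Borel set $B\subset E_n$ of finite positive measure, and a unit orthogonal basis $\{e_i\}_{i=1}^n$ of $L^2(TM)\restr{B}$ (Proposition \ref{decomposition}). I would repeat the computation in the proof of Proposition \ref{finite dim} with the single change that the auxiliary function is taken to be
$$\Phi(x,y,{\bf z}):=\lambda(xy+x)-by+\sum_{i,j}S_{ij}(z_i-c_i)(z_j-c_j)-\sum_{i,j}S_{ij}c_ic_j,\qquad\lambda,b\in\R,$$
where $S=(S_{ij})$ is now an arbitrary symmetric $n\times n$ matrix (the choice $S=\Id_n$ being the function used there, and $\Phi(0)=0$ in either case). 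Inserting $\Phi(f,g,h_1,\dots,h_n)$ into Lemma \ref{self improve}, expanding by the chain rules of Lemma \ref{chain}, and then carrying out the three reductions of Proposition \ref{finite dim}---replacing the constant $b$ by $\lambda f$ and the $c_i$ by test functions $h_i$ (admissible since ${\bf \Gamma}_2$ and $\H_\bullet(\nabla\bullet,\nabla\bullet)\,\mm$ have finite total variation), then $g$ by ${\bf 1}$, then $\nabla h_i$ by the frame $e_i$ by density---one obtains, up to an overall positive factor, a quadratic inequality in $\lambda$,
$$A\lambda^2+2B_S\lambda+C_S\ \ge\ 0\qquad\mm\text{-a.e. on }B,\ \ \forall\,\lambda\in\R,$$
where $A=\gamma_2(f)-K|\D f|^2-\tfrac1N(\Delta f)^2$, $\ B_S=\la S,\H_f\ra_{\rm HS}-\tfrac{\Delta f}{N}\tr S$, $\ C_S=|S|^2_{\rm HS}-\tfrac1N(\tr S)^2$, and $\la S,\H_f\ra_{\rm HS}:=\sum_{i,j}S_{ij}\H_f(e_i,e_j)$. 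Since $n\le N$, Cauchy--Schwarz gives $C_S\ge(1-\tfrac nN)|S|^2_{\rm HS}\ge0$, so Lemma \ref{lemma-qf} (applied in the variable $\lambda$) forces $B_S^{\,2}\le A\,C_S$ $\mm$-a.e. on $B$, for every symmetric $S$.

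It remains to optimize over $S$. If $n<N$, the bilinear form $\langle\!\langle S_1,S_2\rangle\!\rangle:=\la S_1,S_2\ra_{\rm HS}-\tfrac1N(\tr S_1)(\tr S_2)$ is a genuine inner product on symmetric matrices, $C_S=\langle\!\langle S,S\rangle\!\rangle$, and writing $B_S=\la S,T\ra_{\rm HS}$ with $T:=\H_f-\tfrac{\Delta f}{N}\Id_n$, representing the functional $S\mapsto B_S$ through $\langle\!\langle\cdot,\cdot\rangle\!\rangle$ and applying Cauchy--Schwarz for that inner product yields $\sup_{S\ne0}B_S^{\,2}/C_S=|T|^2_{\rm HS}+\tfrac1{N-n}(\tr T)^2$; hence $A\ge|T|^2_{\rm HS}+\tfrac1{N-n}(\tr T)^2$, and expanding $T$ and transferring $\tfrac1N(\Delta f)^2$ to the left, an elementary identity turns this into precisely the asserted inequality on $E_n$. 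If instead $n=N$, then $\tr \H_f=\Delta f$ $\mm$-a.e. on $E_N$ by Proposition \ref{finite dim}; restricting to traceless $S$ gives $C_S=|S|^2_{\rm HS}$, $B_S=\la S,\H_f\ra_{\rm HS}$, whence $\sup_{S\ne0}B_S^{\,2}/C_S=|\H_f|^2_{\rm HS}-\tfrac1N(\tr \H_f)^2=|\H_f|^2_{\rm HS}-\tfrac1N(\Delta f)^2$, so $A\ge|\H_f|^2_{\rm HS}-\tfrac1N(\Delta f)^2$, i.e. $\gamma_2(f)\ge K|\D f|^2+|\H_f|^2_{\rm HS}$, which is the claim with the dimensional term equal to $0$. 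Letting $B$ exhaust $E_n$ and $n$ run through $\{1,\dots,\lfloor N\rfloor\}$ completes the proof.

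The only new idea beyond Proposition \ref{finite dim} is to let the quadratic form $\sum_{i,j}S_{ij}(z_i-c_i)(z_j-c_j)$ carry a full symmetric matrix, not just a multiple of the identity: this is what makes the off-diagonal Hessian entries $\H_f(e_i,e_j)$ enter, thereby promoting the trace bound of Proposition \ref{finite dim} to a Hilbert--Schmidt bound. The effort---and the only place real care is needed---is the bookkeeping: pushing the matrix $S$ through the lengthy chain-rule expansion and through the three limiting/density steps until the measure inequality collapses to the scalar quadratic in $\lambda$. The subsequent optimization over $S$ is finite-dimensional linear algebra, the one delicate point being the degeneracy of $\langle\!\langle\cdot,\cdot\rangle\!\rangle$ in the critical dimension $n=N$, which is exactly compensated by the identity $\tr \H_f=\Delta f$ from Proposition \ref{finite dim}.
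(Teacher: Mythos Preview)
Your proof is correct and follows the same self-improvement philosophy as the paper, but the implementation differs in two respects worth noting.

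\emph{Choice of auxiliary function.} The paper does not carry the pair $(\lambda,y)$ from Proposition~\ref{finite dim}; it uses instead
\[
\Phi(x,y_1,\dots,y_N)=x-\tfrac12\sum_{i,j}c_{i,j}(y_i-c_i)(y_j-c_j)-c\sum_i(y_i-c_i)^2+C_0,
\]
i.e.\ a single linear variable $x$ coupled to a symmetric matrix $(c_{i,j})$ \emph{and} an extra scalar $c$. You keep the structure $\lambda(xy+x)-by$ of Proposition~\ref{finite dim} and replace the diagonal quadratic by a full symmetric one.

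\emph{How the optimization is carried out.} In the paper, after the same reductions ($c_i\to h_i$, $\nabla h_i\to e_{i,n}$), the constants $c$ and $c_{i,j}$ are first upgraded to $L^2$ functions and then set to the explicit values $c=\frac{N\,\tr\H_f-\dim_{\rm loc}\Delta f-(N-\dim_{\rm loc})\tr C}{2\dim_{\rm loc}(N-\dim_{\rm loc})}$ and $C=\H_f-\tfrac{\tr\H_f}{\dim_{\rm loc}}\Id_{\dim_{\rm loc}}$; the desired inequality then drops out after a direct algebraic rearrangement. Your route keeps $S$ constant, uses the quadratic in $\lambda$ together with Lemma~\ref{lemma-qf} to obtain $B_S^{\,2}\le A\,C_S$ for every symmetric $S$, and then optimizes pointwise via Cauchy--Schwarz for the deformed inner product $\langle\!\langle S_1,S_2\rangle\!\rangle=\la S_1,S_2\ra_{\rm HS}-\tfrac1N\tr S_1\,\tr S_2$.

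What each buys: the paper's version avoids Lemma~\ref{lemma-qf} entirely but must justify replacing the matrix entries $c_{i,j}$ (and $c$) by $L^2$ functions, which is an extra approximation step. Your version needs no such upgrade---$S$ stays a constant matrix throughout, and the pointwise optimization is legitimate because the discriminant inequality holds $\mm$-a.e.\ for a countable dense family of $S$ and both sides are continuous in $S$. Your treatment of the critical case $n=N$ (restricting to traceless $S$ and invoking $\tr\H_f=\Delta f$ from Proposition~\ref{finite dim}) is also cleaner than the paper's convention $\tfrac00=0$. The algebraic identity you use at the end, $\sup_{S\ne0}B_S^{\,2}/C_S=|T|^2_{\rm HS}+\tfrac1{N-n}(\tr T)^2$ with $T=\H_f-\tfrac{\Delta f}{N}\Id_n$, is correct and unwinds to exactly the claimed bound.
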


\begin{proof}
We define the function $\Phi$ as
\[
\Phi(x, y_1, ... , y_N):=x-\frac12 \sum^N_{i,j} c_{i,j}(y_i-c_i)(y_j-c_j)- c \sum_{i=1}^N (y_i-c_i)^2+C_0
\]
where $c, c_i, c_{i,j}=c_{j,i}$ are constants, $C_0=\frac12 \sum^N_{i,j} c_{i,j}c_ic_j+c\sum_{i=1}^N c_i^2$. Then  we have
\begin{eqnarray*}
\Phi_{x,i}=\Phi_{x,x}&=& 0, ~~~~~~\Phi_{i,j}= -c_{i,j}-2c \delta_{ij}\\
\Phi_x &=& 1,~~~~~~ \Phi_i=-\sum_j c_{i,j}(y_j-c_j)-2c (y_i-c_i).
\end{eqnarray*}

Let $f, h_1, ..., , h_N$ be test functions. Using the chain rules we have

\begin{eqnarray*}
&&|\D \Phi(f, h_1, ... ,h_N)|^2= |\D f|^2+ \sum_i (h_i-c_i) I_i,\\
&&{\bf \Gamma}_2(\Phi(f, h_1, ... ,h_N))= {\bf \Gamma}_2(f)- 2\sum_{i,j}(c_{i,j}+2c \delta_{ij}) \rm H_f(\nabla h_i, \nabla h_j) \,\mm\\
&&-\sum_{i,j,k,l}(c_{i,j}+2c \delta_{ij})(c_{k,l}+2c \delta_{kl}) \la  \nabla h_i, \nabla h_k \ra\la \nabla h_l, \nabla h_j \ra \,\mm+\sum_i (h_i-c_i) {\bf J}_i,\\
&&\Delta \Phi(f, h_1, ... ,h_N)= \Delta f- \sum_{i,j}(c_{i,j}+2c \delta_{ij}) \la \nabla h_i, \nabla h_j \ra+ \sum_i (h_i-c_i) K_i
\end{eqnarray*}
where $\{I_i, K_i\}_i$ are some $L^1(M)$-integrable  terms and $\{{\bf J}_i\}_i$ are measures with finite mass.

Then we apply Lemma \ref{self improve} to the function $\Phi(f, h_1, ... ,h_N)$ to obtain the  inequality

\begin{eqnarray*}
&& {\bf \Gamma}_2(f)- 2\sum_{i,j}(c_{i,j}+2c \delta_{ij}) \H_f(\nabla h_i, \nabla h_j) \,\mm\\
&-&
\sum_{i,j,k,l}(c_{i,j}+2c \delta_{ij})(c_{k,l}+2c \delta_{kl}) \la \nabla h_i, \nabla h_k \ra \la \nabla h_l, \nabla h_j \ra \,\mm +\sum_i (h_i-c_i) J_i \,\mm\\
&\geq & K \Big{(}|\D f|^2+ \sum_i (h_i-c_i) I_i \Big{)}\,\mm +\\
&& \frac1N \Big{(}\Delta(f)- \sum_{i,j}(c_{i,j}+2c \delta_{ij}) \la \nabla h_i, \nabla h_j \ra+ \sum_i (h_i-c_i) K_i
\Big{)}^2\,\mm.
\end{eqnarray*}

Using the same argument as in the proof of last Proposition, we can replace the constants $c_i$ by any simple function. Furthermore, by an approximation argument we can replace the constant $c, c_i, c_{i,j}$ by arbitrary $L^2$ functions. Then pick $c_i=h_i$, the inequality becomes

\begin{eqnarray*}
&& {\bf \Gamma}_2(f)- 2\sum_{i,j}(c_{i,j}+2c \delta_{ij}) \H_f(\nabla h_i, \nabla h_j)\,\mm\\
&-&
\sum_{i,j,k,l}(c_{i,j}+2c \delta_{ij})(c_{k,l}+2c \delta_{kl}) \la \nabla h_i, \nabla h_k \ra \la \nabla h_l, \nabla h_j \ra \,\mm \\
&\geq & K |\D f|^2\,\mm +
 \frac1N \Big{(}\Delta(f)- \sum_{i,j}(c_{i,j}+2c \delta_{ij}) \la \nabla h_i, \nabla h_j \ra
\Big{)}^2\,\mm.
\end{eqnarray*}

Now we restrict the inequality above on Borel set $E_n$, $n \leq N$ where
 $\{E_n\}_{n=1}^{\lfloor N \rfloor}$ is the partition of $X$ in Proposition \ref{decomposition}. The inequality remains true if  we  replace $\nabla h_i$ by $v:=\sum_k \nchi_{A_k}  \nabla f_k$ where $f_k$ are test functions and $A_k$ are disjoint Borel subsets of $E_n$. Therefore due to the density of test functions,  we can replace $\{\nabla h_i \}_1^{n}$ by any $\{e_{i,n}\}_{i=1}^{n}$ which is a unit  orthogonal basis of $L^2(TM)\restr{E_n}$. Doing this replacement on every $E_n$, we obtain
\begin{eqnarray*}
 &&{\bf \Gamma}_2(f)- \Big{(}2\sum_{i,j=1}^{\dim_{\rm loc}}(c_{i,j}+2c \delta_{ij}) \H_f(e_{i,{\dim_{\rm loc}}}, e_{j,{\dim_{\rm loc}}})
+
\sum_{i,j=1}^{\dim_{\rm loc}}(c_{i,j}+2c \delta_{ij})(c_{i,j}+2c \delta_{ij})\Big{)}\,\mm\\
&\geq&  K |\D f|^2\,\mm +
 \frac1N \Big{(}\Delta(f)- \sum_{i=1}^{\dim_{\rm loc}}(c_{i,i}+2c )
\Big{)}^2\,\mm.
\end{eqnarray*}

Pick
\[
c=\left\{\begin{array}{ll}
\displaystyle{\frac{N \tr {\H}_f -\dim_{\rm loc} \Delta f -(N-\dim_{\rm loc}) \tr C}{2n(N-\dim_{\rm loc})}}\qquad &\text{on}~\{x: \dim_{\rm loc}(x) \neq N\} \\
0 \qquad &\text{on}~ \{x: \dim_{\rm loc}(x)=N\}
\end{array}
\right.
\] in the inequality above, where $C=(c_{i,j})$ is a  symmetric $\dim_{\rm loc} \times \dim_{\rm loc}$-matrix whose ${i,j}$ entry is given by the function $c_{i,j}$. Using the local basis vector fields $e_{i,n}$ we can associate to $C$ the $L^\infty(M)$-bilinear map from 
$[L^2(TM)]^2$ with values in $L^0(M)$ sending $\left ( \sum_{i}a_{i,n}e_{i,n}, \sum_j b_{j, n} e_{j,n}\right )$ to $\sum_{i,j}a_{i,n} b_{j, n} c_{i,j}$ and abusing a bit the notation we shall call this map $C$ as well.

Then we obtain the inequality
\begin{eqnarray*}
&~&{\bf \Gamma}_2(f) \geq  \Big{(}K|\D f|^2 - |C|_{\rm HS}^2+ 2\la C, \H_f \ra_{\rm HS}\Big{)} \,\mm\\ &+&\Big{(}\frac{1}{N} (\tr C)^2 +\frac{1}{N} (\Delta f)^2 -\frac{2}{N} (\Delta f)( \tr C)\Big{)}\,\mm\\
&+& \frac { \big{(} (N \tr \H_f -\dim_{\rm loc} \Delta f)^2+(\dim_{\rm loc}-N)^2 (\tr C)^2+2(\dim_{\rm loc}-N)(\tr C)(N \tr \H_f- \dim_{\rm loc} \Delta f) \big{)}}{N\dim_{\rm loc}(N-\dim_{\rm loc})}\,\mm\\
&=& \Big{(}K|\D f|^2 + \frac 1N (\Delta f)^2+\frac 1{N\dim_{\rm loc}(N-\dim_{\rm loc})} (N \tr \H_f -\dim_{\rm loc} \Delta f)^2 \Big{)}\,\mm\\
&-& \Big{(}| C-\H_f +\frac{\tr \H_f }{\dim_{\rm loc}} \Id_{\dim_{\rm loc}} |_{\rm HS}^2+| \H_f-\frac {\tr \H_f}{\dim_{\rm loc}} \Id_{\dim_{\rm loc}}|_{\rm HS}^2+\frac{1}{\dim_{\rm loc}} (\tr C)^2\Big{)}\,\mm
\end{eqnarray*}
where $\la \cdot, \cdot \ra_{\rm HS}$ is the inner product induced by the Hilbert-Schmidt norm and $\Id_{\dim_{\rm loc}}$ is the $\dim_{\rm loc}$-identity  matrix. It can be seen from the Proposition \ref{finite dim} that this inequality still  makes sense if we accept $\frac00=0$.

Then we pick $C=\H_f-\frac{\tr \H_f }{\dim_{\rm loc}} \Id_{\dim_{\rm loc}}$ in this inequality and finally obtain
\begin{eqnarray*}
{\bf \Gamma}_2(f) &\geq &  \Big{(}K|\D f|^2 + \frac 1N (\Delta f)^2+\frac 1{N{\dim_{\rm loc}}(N-{\dim_{\rm loc}})} (N \tr \H_f -{\dim_{\rm loc}} \Delta f)^2\Big{)}\,\mm \\
&+& | \H_f-\frac {\tr \H_f}{\dim_{\rm loc}} \Id_{\dim_{\rm loc}}|_{\rm HS}^2\,\mm\\
&=& \Big{(}K|\D f|^2 +| \H_f|_{\rm HS}^2+\frac 1{(N-{\dim_{\rm loc}})} (\tr \H_f -\Delta f)^2\Big{)}\,\mm,
\end{eqnarray*}
which is the thesis.

\end{proof}


\section{$N$-Ricci tensor}

In this section, we use the improved version of Bochner inequality  obtained in the last section to give a definition of $N$-Ricci tensor.

We recall that the class of test vector fields ${\rm TestV}(M) \subset L^2(TM)$  is defined  as
\[
{\rm TestV}(M):= \{\mathop{\sum}_{i=1}^n  g_i   \nabla  f_i: n\in \mathbb{N}, f_i, g_i \in  {\rm TestF}(M), i= 1, ... , n\}.
\]
It can be proved that ${\rm TestV}(M)$ is dense in $L^2(TM)$ when $M$ is ${\rm RCD }$.

Let $X=\sum_ig_i \nabla f_i \in {\rm TestV}(M)$ be a test vector field.  We  define $\nabla X \in L^2(TM)\otimes L^2(TM)$ by the following formula:
\[
\la \nabla X, v_1 \otimes v_2 \ra_{L^2(TM)\otimes L^2(TM)}:= \sum_i\la \nabla g_i, v_1 \ra \la \nabla f_i, v_2 \ra+ \sum_ig_i\H_{f_i}(v_1, v_2),~~~~\forall v_1, v_2 \in {\rm TestV}(M).
\]
It can be seen that this definition is well posed and that the completion of ${\rm TestV}(M)$ with respect to the norm $\| \cdot \|_{C}:=\sqrt {\| \cdot \|_{L^2(TM)}^2+ \int | \nabla \cdot |^2_{L^2(TM)\otimes L^2(TM)}\,\mm}$ can be identified with a subspace of
 $L^2(TM)$, which is denoted by $H^{1,2}_C(TM)$.

Let $X \in L^2(TM)$. We say that $X \in {\rm D}({\rm div})$ if there exists a  function $g\in L^2(M)$ such that
\[
\int h g\,\mm=-\int \la \nabla h, X\ra\,\mm
\]
for any $h \in W^{1,2}(M)$. We then denote  such  function which is clearly unique by ${\rm div}X$.

It is easy to see that ${\rm div}\cdot $ is a linear operator on ${\rm D}({\rm div})$, that ${\rm TestV}(M) \subset {\rm D}({\rm div})$ and  that the formula:
\[
{\rm div}(g\nabla f)=\la \nabla g, \nabla f\ra +g \Delta f,~~~~f,g \in {\rm TestF}(M)
\]
holds.

It is unknown whether there is any inclusion  relation between ${\rm D}({\rm div})$ and $H^{1,2}_C(TM)$. However, in  \cite{G-N} it has been introduced  the space $(H^{1,2}_H(TM), \| \cdot \|_{H^{1,2}_H} )$ which is contained in both ${\rm D}({\rm div})$ and $H^{1,2}_C(TM)$, and will be useful for our purposes. In the smooth setting, $H^{1,2}_H(TM)$ would be the space of vector fields corresponding to $L^2$ 1-forms having both exterior derivative and co-differential in $L^2(M)$.

The properties of $H^{1,2}_H(TM)$ that we shall need are:
\begin{itemize}
\item [(a)] ${\rm TestV}(M)$ is dense in $H^{1,2}_H(TM)$,
\item [(b)] $H^{1,2}_H(TM)$ is contained in $H^{1,2}_C(TM)$ with continuous embedding,
\item  [(c)] $ H^{1,2}_H(TM) \subset {\rm D}({\rm div})$ and for any $X_n \to X$  in $H^{1,2}_H(TM)$, we have ${\rm div} X_n \to {\rm div}X$ in $L^2(M)$.
\end{itemize}

Now, we can generalize the Proposition \ref{finite dim}  in the following way. We denote the natural correspondences (dualities) between  $L^2(TM)$   and $ L^2(T^*M)$ by $(\cdot )^b$ and $(\cdot )^\sharp$ (same notation for $L^2(TM)\otimes L^2(TM)$   and $ L^2(T^*M)\otimes L^2(T^*M)$). For example,
$(\nabla f)^b=\d f$, $(\H_f)^\sharp =\nabla \nabla f$ for $f\in {\rm TestF}(M)$. In this case, we know $\la T_1, T_2 \ra_{\rm HS}=\la (T_1)^\sharp, (T_2)^\sharp \ra_{L^2(TM)\otimes L^2(TM)}$ for any $T_1, T_2 \in  L^2(T^*M)\otimes L^2(T^*M)$.

\begin{proposition}
Let $M=\ms$ be a $\rcdkn$ metric measure space, $E\subset X$ be a Borel set. Assume that the local dimension of $M$ on $E$ is $N$, then
$\tr (\nabla X)^b={\rm div}X$ $\mm$-a.e. $x \in E$ for any $X \in H_H^{1,2}(TM)$.
\end{proposition}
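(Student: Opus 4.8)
The plan is to reduce the statement to Proposition~\ref{finite dim} by approximation, using the density property (a) and the continuity properties (b)--(c) of $H^{1,2}_H(TM)$. First I would recall that Proposition~\ref{finite dim} already gives the identity $\tr\H_f=\Delta f$ $\mm$-a.e. on $E$ for every $f\in{\rm TestF}(M)$. From this, for a test vector field $X=\sum_i g_i\nabla f_i\in{\rm TestV}(M)$ I would compute both sides explicitly: by definition of $\nabla X$ one has $\tr(\nabla X)^b=\sum_n\sum_{i}\big(\sum_k\la\nabla g_k,e_{i,n}\ra\la\nabla f_k,e_{i,n}\ra+\sum_k g_k\H_{f_k}(e_{i,n},e_{i,n})\big)$ on $E_n$, which equals $\sum_k\la\nabla g_k,\nabla f_k\ra+\sum_k g_k\tr\H_{f_k}$; and on the Borel set $E$ where $\dim_{\rm loc}=N$, Proposition~\ref{finite dim} lets me replace $\tr\H_{f_k}$ by $\Delta f_k$, giving $\sum_k(\la\nabla g_k,\nabla f_k\ra+g_k\Delta f_k)$. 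But this is exactly ${\rm div}X$ by the divergence formula ${\rm div}(g\nabla f)=\la\nabla g,\nabla f\ra+g\Delta f$ together with linearity of ${\rm div}$. So the identity holds on $E$ for all $X\in{\rm TestV}(M)$.

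Next I would pass to the general case $X\in H^{1,2}_H(TM)$ by approximation. By property (a), choose $X_n\in{\rm TestV}(M)$ with $X_n\to X$ in $H^{1,2}_H(TM)$. By property (c), ${\rm div}X_n\to{\rm div}X$ in $L^2(M)$, hence (up to a subsequence) $\mm$-a.e., in particular $\mm$-a.e. on $E$. For the other side, property (b) gives $X_n\to X$ in $H^{1,2}_C(TM)$, so $\nabla X_n\to\nabla X$ in $L^2(TM)\otimes L^2(TM)$; dualizing, $(\nabla X_n)^b\to(\nabla X)^b$ in $L^2(T^*M)\otimes L^2(T^*M)$. Since on $E$ (where $\dim_{\rm loc}=N$ is finite and a unit orthogonal basis $\{e_{i,N}\}_{i=1}^N$ exists on any subset of finite positive measure) the trace $\tr T=\sum_{i}T^\sharp(e_{i,N},e_{i,N})$, i.e. $\tr T=\la T,\Id_{\dim_{\rm loc}}\ra_{\rm HS}$, is a bounded linear functional of $T$ in the $\rm HS$ (equivalently $L^2\otimes L^2$) norm, convergence in $L^2(T^*M)\otimes L^2(T^*M)$ forces $\tr(\nabla X_n)^b\to\tr(\nabla X)^b$ in $L^1$ or $L^2$ on sets of finite measure, hence $\mm$-a.e. on $E$ along a further subsequence. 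Combining the two limits with the identity $\tr(\nabla X_n)^b={\rm div}X_n$ on $E$ yields $\tr(\nabla X)^b={\rm div}X$ $\mm$-a.e. on $E$.

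One technical point I would be careful about is the a.e.\ convergence of the traces: the decomposition in Proposition~\ref{decomposition} only furnishes orthonormal bases on subsets of \emph{finite} measure, so I would first localize, proving the identity on $E\cap B$ for arbitrary $B$ of finite measure and then letting $B$ exhaust $X$; on each such $E\cap B$ the trace is genuinely continuous from $L^2(T^*M)\otimes L^2(T^*M)$ to $L^1(E\cap B)$, which is enough to extract the a.e.\ limit. I also want to note that $\tr(\nabla X)^b$ is well defined $\mm$-a.e.\ on $E$ precisely because $\dim_{\rm loc}=N<\infty$ there, so the sum defining the trace is finite.

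The main obstacle, and really the only nontrivial input, is already packaged in Proposition~\ref{finite dim}: the fact that maximal local dimension forces $\tr\H_f=\Delta f$. Everything after that is a bookkeeping exercise in transporting that identity through the definitions of $\nabla X$ and ${\rm div}$ on test vector fields and then through the density/continuity properties (a)--(c) of $H^{1,2}_H(TM)$. The one place demanding genuine care is making the $\mm$-a.e.\ passage to the limit rigorous on sets of finite measure, as explained above.
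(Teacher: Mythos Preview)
Your proof is correct and follows essentially the same approach as the paper: reduce to Proposition~\ref{finite dim} for gradients, extend to ${\rm TestV}(M)$ via the explicit formulas for $\nabla X$ and ${\rm div}X$, and then pass to $H^{1,2}_H(TM)$ using the density and continuity properties (a)--(c). The only minor difference is that the paper handles the limiting step for the trace more directly by writing $\tr(\nabla X_n)^b=\la(\nabla X_n)^b,\Id_{\dim_{\rm loc}}\ra_{\rm HS}$ and observing this converges in $L^2$ globally (since $|\Id_{\dim_{\rm loc}}|_{\rm HS}^2=\dim_{\rm loc}\leq N$ is bounded), so your localization to sets of finite measure, while not wrong, is unnecessary.
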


\begin{proof}
Thanks to the Proposition \ref{finite dim}, it is sufficient to prove the equality
\begin{equation}\label{eq-1}
\tr (\nabla X)^b={\rm div}X~~~ \mm-\text{a.e. on}~~ E
\end{equation}
for any $X \in H^{1,2}_H(TM)$, under the assumption  that  $\tr \H_f =\Delta f$ $\mm$-a.e. on  $E$ for any $f \in {\rm TestF}(M)$.

First of all, as we know $(\nabla \nabla f)^b=\H_f$ and ${\rm div}(\nabla f)=\Delta f$, the equality \eqref{eq-1} holds for every  $X$ of the form $\nabla f$  for some $f\in {\rm TestF}(M)$.

Secondly, for any $X=\sum_i g_i \nabla f_i \in {\rm TestV}$, the assertion holds by recalling the identities
 $\nabla (g \nabla f)= \nabla g \otimes \nabla f+ g \nabla(\nabla f)$ and ${\rm div}(g \nabla f)= \la \nabla g, \nabla f\ra+g {\rm div}(\nabla f)$.

 Finally, for any $X \in H_H^{1,2}(TM)$, we can find a sequence $\{X_i\}_i \subset {\rm TestV}$ such that $X_i \to  X$ in $H_H^{1,2}(TM)$. Therefore $\la (\nabla X_i)^b, \Id_{\dim_{\rm loc}} \ra_{\rm HS} \to \la (\nabla X)^b, \Id_{\dim_{\rm loc}} \ra_{\rm HS}$ in $L^2$ because $\|\cdot \|_{ H^{1,2}_H}$ convergence is stronger than the $\|\cdot \|_{ H^{1,2}_C}$ convergence. Then $\tr (\nabla X_{i})^b=\la (\nabla X_i)^b, \Id_{\dim_{\rm loc}} \ra_{\rm HS} \to \la (\nabla X)^b, \Id_{\dim_{\rm loc}} \ra_{\rm HS}= \tr (\nabla X)^b$ in $L^2$.  Since $X_{i} \to X$ in $H^{1,2}_H(TM)$ implies ${\rm div}X_i \to {\rm div}X$ in $L^2$, we conclude that  ${\rm div}X= \tr (\nabla X)^b$ $\mm$-a.e.  on $ E$.
\end{proof}


We shall now use the result of Theorem \ref{th} to define the $N$-Ricci tensor.

We start  defining ${\bf \Gamma}_2(\cdot, \cdot): [{\rm TestV}(M)]^2 \mapsto {\rm Meas}(M)$ by
\begin{eqnarray*}
{\bf \Gamma}_2(X, Y):={\bf \Delta} \frac{\la X, Y \ra}{2}+\Big{(} \frac12 \la X, (\Delta_{\rm H} Y^b)^\sharp \ra +\frac12 \la Y, (\Delta_{\rm H} X^b)^\sharp \ra \Big{)} \, \mm,
\end{eqnarray*}
where $(X, Y) \in [{\rm TestV}(M)]^2$ and $\Delta_{\rm H}$ is the Hodge Laplacian. It is proved in \cite{G-N} that ${\bf \Gamma}_2(\nabla f, \nabla f)={\bf \Gamma}_2(f)$ for $f \in {\rm TestF}(M)$ and that ${\bf \Gamma}_2(\cdot, \cdot)$ can be continuously  extended to $[H^{1,2}_H(TM)]^2$. Furthermore, it is known from Theorem 3.6.7 of \cite{G-N}  that $\bRic(X, Y):={\bf \Gamma}_2(X, Y)- \la (\nabla X)^b, (\nabla Y)^b \ra_{\rm HS} \, \mm$ is a symmetric ${\rm TestF}(M)$-bilinear form on $[H^{1,2}_H(TM)]^2$.

We then define the map ${R}_N$ on  $[H^{1,2}_H(TM)]^2$  by
\[
{ R}_N(X,Y):=\left\{\begin{array}{lll}
\displaystyle{\frac1{N-{\dim_{\rm loc}}}\big{(}\tr (\nabla X)^b -{\rm div} X\big{)}\big{(}\tr (\nabla Y)^b -{\rm div} Y\big{)} }  &{\dim_{\rm loc}}<N,\\
0 &{\dim_{\rm loc}}\geq N.
\end{array}
\right.
\]
From the continuity of ${\rm div}\cdot$ and $\tr (\nabla \cdot )^b$ on
$H^{1,2}_H(TM)$,  we deduce that $(X, Y) \mapsto { R}_N(X,Y)\,\mm$ is continuous on  $[H^{1,2}_H(TM)]^2$ with values in ${\rm Meas}(M)$.
From the calculus rules developed in \cite{G-N}, it is easy to see  that $(X, Y) \mapsto {R}_N(X, Y)\,\mm$ is homogenous with respect to the multiplication of test functions, i.e.
\[
f {R}_N(X, Y)\,\mm={ R}_N(f X, Y)\,\mm
\]
for any $f  \in {\rm TestF}(M)$. Therefore we can define $\bRicn(\cdot ,\cdot)$ on $[H^{1,2}_H(TM)]^2$ in the following way:

\begin{definition}[Ricci tensor] \label{Ricci tensor}
We define $\bRicn$ as a measure valued  map on $[H^{1,2}_H(TM)]^2$ such that for any $X, Y \in H^{1,2}_H(TM)$ it holds
\begin{eqnarray*}
\bRicn(X,Y)&=&
{\bf \Gamma}_2(X, Y) -\la (\nabla X)^b, (\nabla Y)^b \ra_{\rm HS} \, \mm-{R}_N(X, Y)\,\mm.
\end{eqnarray*}
\end{definition}

Combining the discussions above and Proposition \ref{finite dim}, we know $\bRicn$ is a well defined tensor, i.e. $(X, Y) \mapsto \bRicn(X, Y)$ is a symmetric ${\rm TestF}(M)$-bilinear form. Then, we can prove the following theorem by combining our  Theorem \ref{th} and Theorem 3.6.7 of \cite{G-N}.

\begin{theorem} \label{th-ricci}
Let $M$ be a $\rcdkn$ space. Then
\[
\bRicn(X, X) \geq K|X|^2\,\mm,
\]
 and
\begin{eqnarray}\label{ineq-main}
{\bf \Gamma}_2(X,X) &\geq & \Big{(} \frac{({\rm div}X)^2}{N}+\bRicn(X, X) \Big{)}\,\mm
\end{eqnarray}
holds for any $X \in H^{1,2}_H(TM)$.
Conversely, on a ${\rm RCD}(K', \infty)$ space $M$, assume that
\begin{itemize}
\item [(1)] ${\rm dim}_{\rm max} M \leq N$
\item [(2)] $\tr (\nabla X)^b={\rm div}X~ \mm-\text{a.e. on}~~ \{{\dim}_{\rm loc}=N\}, \forall X \in H^{1,2}_H(TM)$
\item [(3)]  $\bRicn \geq K$
\end{itemize}
for some $K \in \R$, $N \in [1,+\infty]$, then it is  $\rcdkn$.
\end{theorem}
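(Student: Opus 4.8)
The plan is to treat the two implications separately; in each case the strategy is to reduce the assertion about a general $X\in H^{1,2}_H(TM)$ to the case $X=\nabla f$, $f\in{\rm TestF}(M)$, where Theorem \ref{th}, Proposition \ref{finite dim} and Proposition \ref{becondition} apply directly, and then to propagate by tensoriality and density.

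\emph{Direct implications.} For $f\in{\rm TestF}(M)$ one has $(\nabla\nabla f)^b=\H_f$, ${\rm div}\,\nabla f=\Delta f$, $|\nabla f|^2=|\D f|^2$ and ${\bf\Gamma}_2(\nabla f,\nabla f)={\bf\Gamma}_2(f)$, so Definition \ref{Ricci tensor} reads
\[
\bRicn(\nabla f,\nabla f)={\bf\Gamma}_2(f)-|\H_f|_{\rm HS}^2\,\mm-\frac1{N-\dim_{\rm loc}}(\tr\H_f-\Delta f)^2\,\mm ,
\]
the last term being $0$ on $\{\dim_{\rm loc}=N\}$; Theorem \ref{th} is precisely the assertion that this is $\geq K|\nabla f|^2\,\mm$. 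To upgrade this to $\bRicn(X,X)\geq K|X|^2\,\mm$ for every $X\in H^{1,2}_H(TM)$ I would use that $\bRicn$ is a continuous symmetric ${\rm TestF}(M)$-bilinear form and that ${\rm TestV}(M)$ is dense in $H^{1,2}_H(TM)$: for $X=\sum_ig_i\nabla f_i$, applying the bound to $\nabla(\sum_ia_if_i)$ with constant vectors $a$ shows that the matrix of measures $\big(\bRicn(\nabla f_i,\nabla f_j)-K\la\nabla f_i,\nabla f_j\ra\,\mm\big)_{i,j}$ has pointwise positive semidefinite density and positive semidefinite singular part (a matrix version of Lemma \ref{lemma-qf}); contracting with $g_ig_j$ gives the bound first for simple coefficients, then on all of ${\rm TestV}(M)$, then on $H^{1,2}_H(TM)$ by continuity --- the localization already used for $\bRic$ in \cite{G-N}. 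Inequality \eqref{ineq-main} is then essentially formal: by Definition \ref{Ricci tensor}, ${\bf\Gamma}_2(X,X)=\bRicn(X,X)+|(\nabla X)^b|_{\rm HS}^2\,\mm+{R}_N(X,X)\,\mm$, so it suffices to check $|(\nabla X)^b|_{\rm HS}^2+{R}_N(X,X)\geq\frac{({\rm div}X)^2}{N}$ $\mm$-a.e. Where $\dim_{\rm loc}<N$, Cauchy--Schwarz gives $|(\nabla X)^b|_{\rm HS}^2\geq\frac{(\tr(\nabla X)^b)^2}{\dim_{\rm loc}}$, and the elementary identity $\min_{t\in\R}\big(\frac{t^2}{n}+\frac{(t-d)^2}{N-n}\big)=\frac{d^2}{N}$ (with $n=\dim_{\rm loc}$, $t=\tr(\nabla X)^b$, $d={\rm div}X$) closes the estimate; where $\dim_{\rm loc}=N$ one has ${R}_N=0$ and $\tr(\nabla X)^b={\rm div}X$ $\mm$-a.e. (the Proposition proved above), so Cauchy--Schwarz alone suffices.

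\emph{Converse.} Here I would invoke Proposition \ref{becondition} to reduce the goal to the weak Bochner bound ${\bf\Gamma}_2(f)\geq\big(K|\D f|^2+\frac1N(\Delta f)^2\big)\,\mm$ for all $f\in{\rm TestF}(M)$; on the ${\rm RCD}(K',\infty)$ space $M$ the Hessian, ${\bf\Gamma}_2$, the decomposition $\{E_n\}$ of Proposition \ref{decomposition}, and hence $\bRicn$, are all available. Fix $f$ and apply hypothesis (3) to $X=\nabla f$. On $\{\dim_{\rm loc}<N\}$ this reads ${\bf\Gamma}_2(f)\geq K|\D f|^2\,\mm+|\H_f|_{\rm HS}^2\,\mm+\frac1{N-\dim_{\rm loc}}(\tr\H_f-\Delta f)^2\,\mm$, and the Cauchy--Schwarz-plus-minimization above bounds the last two terms below by $\frac1N(\Delta f)^2\,\mm$. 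On $\{\dim_{\rm loc}=N\}$, hypothesis (2) applied to $X=\nabla f$ gives $\tr\H_f=\Delta f$, hence ${R}_N(\nabla f,\nabla f)=0$ and (3) reads ${\bf\Gamma}_2(f)\geq K|\D f|^2\,\mm+|\H_f|_{\rm HS}^2\,\mm\geq\big(K|\D f|^2+\frac1N(\Delta f)^2\big)\,\mm$ again by Cauchy--Schwarz. Hypothesis (1) guarantees $\dim_{\rm loc}\leq N$ $\mm$-a.e., so these two cases exhaust the space; combining them and invoking Proposition \ref{becondition} yields that $M$ is $\rcdkn$. (When $N=\infty$ one has ${R}_N\equiv0$, so $\bRicn=\bRic$ and the statement coincides with Gigli's characterization of ${\rm RCD}(K,\infty)$.)

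\emph{Main obstacle.} The only genuinely non-routine step is the transfer of $\bRicn\geq K$ from gradients to arbitrary $X\in H^{1,2}_H(TM)$: one must verify that the splitting into absolutely continuous and singular parts is compatible with contraction against $L^\infty$ (rather than merely constant) coefficients, and that the passage to the $H^{1,2}_H$-closure is legitimate. Both points are handled exactly as in \cite{G-N}; everything else is the elementary linear algebra above combined with Theorem \ref{th}, Proposition \ref{finite dim} and Proposition \ref{becondition}.
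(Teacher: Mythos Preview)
Your proposal is correct and follows essentially the same route as the paper: reduce to $X=\nabla f$ via Theorem~\ref{th}, extend to ${\rm TestV}(M)$ by ${\rm TestF}$-bilinearity and localization to constant coefficients, pass to $H^{1,2}_H(TM)$ by continuity, and derive \eqref{ineq-main} from Cauchy--Schwarz together with the identity $\tfrac{t^2}{n}+\tfrac{(t-d)^2}{N-n}\geq\tfrac{d^2}{N}$; the converse is exactly the paper's argument as well. The only cosmetic difference is that you phrase the localization step as a ``matrix version of Lemma~\ref{lemma-qf}'' whereas the paper simply restricts to sets where the simple-function coefficients are constant and applies Theorem~\ref{th} to $F=\sum a_{i,k_i}f_i$ directly---same content, different packaging.
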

\begin{proof}
Assume that we have the decomposition:
\[
\bRicn(X, X)=\bRicn^{ac}(X, X)+\bRicn^{sing}(X, X)
\]
where $\bRicn^{ac}(X, X)={\rm Ricci}_N(X, X)\,\mm$ is the absolutely continuous part of $\bRicn(X, X)$ and $\bRicn^{sing}(X, X)$ is its  singular part. Then we need to prove that
\begin{equation}\label{thineq-1}
{\rm Ricci}_N(X, X)\,\mm  \geq K|X|^2\,\mm
\end{equation}
and
\begin{equation}\label{thineq-2}
\bRicn^{sing}(X, X)  \geq 0
\end{equation}
for any $X \in H^{1,2}_H(TM)$.

From the definition of $\bRicn$ and $\bRic$ we know that $\bRicn^{sing}(X, X)$ coincides with the singular part of $\bRic(X, X)$. It is proved in Lemma 3.6.2, \cite{G-N} that $\bRic(X, X) \geq K|X|^2\,\mm$. Therefore \eqref{thineq-2} holds for any $X \in H^{1,2}_H(TM)$.

We turn to prove \eqref{thineq-1} for any $X \in H^{1,2}_H(TM)$. It can be seen from the definition  that \eqref{thineq-1} means
\begin{eqnarray}\label{ineq-1}
\gamma_2(X, X)\,\mm \geq  \Big{(}K|X|^2+|(\nabla X)^b|^2_{\rm HS}
+\frac1{N-{\dim_{\rm loc}}}(\tr (\nabla X)^b -{\rm div} X)^2\Big{)}\, \mm
\end{eqnarray}
where $\gamma_2(X, X)\,\mm$ is the absolutely continuous part of ${\bf \Gamma}_2(X, X)$.

First of all, notice that for $X=\nabla f$, \eqref{ineq-1} is exactly the inequality in Theorem \ref{th}. Hence \eqref{thineq-1} holds for any $X=\nabla f$, $f \in {\rm TestF}(M)$.

Secondly, we need to prove \eqref{thineq-1} for any $X \in {\rm TestV}(M)$.  Let $X=\sum_i g_i \nabla f_i$ be a test vector field.
From the homogeneity of ${R}_N\,\mm$ and $\bRic:={\bf \Gamma}_2(\cdot, \cdot) -\la (\nabla \cdot)^b, (\nabla \cdot)^b \ra_{\rm HS} \, \mm$ which is proved in \cite{G-N} we know that $\bRicn(\cdot, \cdot)$ is a symmetric ${\rm TestF}(M)$-bilinear form. In particular, 
${\rm Ricci}_N(\cdot, \cdot)\,\mm$ is a symmetric ${\rm TestF}(M)$-bilinear form. Therefore ${\rm Ricci}_N(X, X) =\sum_{i,j} g_ig_j{\rm Ricci}_N(\nabla f_i, \nabla f_j)$ $\mm$-a.e.. Thus we need  to prove the inequality
\begin{equation}\label{ineq-2}
\sum_{i,j} g_ig_j{\rm Ricci}_N(\nabla f_i, \nabla f_j)\,\mm \geq K\sum_{i,j}g_ig_j\la \nabla f_i, \nabla f_j \ra\,\mm.
\end{equation}
 Hence by  approximation with simple functions (see Lemma \ref{lemma-approx} below), it is sufficient to prove this inequality for simple functions $g_i=\sum_{k_i=1}^{K_i} a_{i,k_i}\nchi_{E_{i,k_i}}$, i.e.
 \begin{equation*}
\sum_{i,j,k_i,k_j} a_{i,k_i}a_{j,k_j} \nchi_{E_{i,k_i} \cap E_{j,k_j}}{\rm Ricci}_N(\nabla f_i, \nabla f_j)\,\mm \geq K\sum_{i,j,k_i,k_j} a_{i,k_i}a_{j,k_j} \nchi_{E_{i,k_i} \cap E_{j,k_j}}\la \nabla f_i, \nabla f_j \ra\,\mm.
\end{equation*}

Let $E \in X$ be a Borel set with positive measure such that $E=\cap_I (E_{i,k_i} \cap E_{j,k_j})$ where $I:=\{(i,j,k_i,k_j):\mm(E\cap E_{i,k_i} \cap E_{j,k_j})>0$. We then restrict the inequality above on $E$
\[
\mathop{\sum}_{ (i,j,k_i,k_j) \in I}{\rm Ricci}_N(\nabla a_{i,k_i}f_i, \nabla a_{j,k_j}f_j) \,\mm\restr{E} \geq K \mathop{\sum}_{(i,j,k_i,k_j)\in I}\la \nabla a_{i,k_i}f_i, \nabla a_{j,k_j}f_j \ra\,\mm\restr{E},
\]
which is equivalent to
\[
\bRicn(\nabla F, \nabla F) \restr{E} \geq K |\D F|^2\,\mm\restr{E},
\]
where $F=\mathop{\sum}_{(i,k_i): \exists (i,j,k_i,k_j) \in I}a_{i,k_i}f_i$. Clearly, this is true due to Theorem \ref{th}. Then we can repeat this argument on all $E$ which is a decomposition of $X$ and prove the assertion.

Next, it is sufficient to prove that \eqref{ineq-1} can be continuously extended to $ H^{1,2}_H(TM)$.  It is proved in Theorem 3.6.7 of \cite{G-N} that ${\bf \Gamma}_2(X,X)-|(\nabla X)^b|^2_{\rm HS} \, \mm $ vary continuously w.r.t. $\|\cdot \|_{\rm TV}$ as $X$ varies in $H_H^{1,2}(M)$. The term $\frac1{N-{\dim_{\rm loc}}}(\tr (\nabla X)^b -{\rm div} X)^2\,\mm$ also varies continuously in ${\rm Meas}(M)$ due to the property (b) and (c) of $H^{1,2}_H(TM)$. Therefore we know \eqref{ineq-1} holds for all $X \in H^{1,2}_H(TM)$.

Moreover, from the definition of $\bRicn$ we can see that
\begin{eqnarray*}
\Big{(} \frac{({\rm div}X)^2}{N}+\bRicn(X, X) \Big{)}\,\mm
&=& {\bf \Gamma}_2(X, X)-|(\nabla X)^b|_{\rm HS}^2\,\mm+\frac{({\rm div}X)^2}{N}\,\mm\\
&-&\frac1{N-{\dim_{\rm loc}}}\big{(}\tr (\nabla X)^b -{\rm div} X\big{)}^2\, \mm\\
&\leq& {\bf \Gamma}_2(X, X)-\frac{(\tr (\nabla X)^b)^2}{{\dim_{\rm loc}}}\,\mm+\frac{({\rm div}X)^2}{N}\,\mm\\
&-&\frac1{N-{\dim_{\rm loc}}}(\tr (\nabla X)^b -{\rm div} X)^2\Big{)}\, \mm\\
&\leq & {\bf \Gamma}_2(X, X)
\end{eqnarray*}
which is the inequality \eqref{ineq-main}.

Conversely, picking $X=\nabla f$, $f \in {\rm TestF}$ in $\bRicn(X, X) \geq K|X|^2\,\mm$, we have the following inequality according to the definition
\begin{eqnarray*}
{\bf \Gamma}_2(f) &\geq& \big{(} K|\D f|^2+|\H_f|^2_{\rm HS}+\frac1{N-{\dim_{\rm loc}}}(\tr \H_f -\Delta f)^2 \big{)} \, \mm.
\end{eqnarray*}

Then by Cauchy-Schwarz inequality we obtain
\begin{eqnarray*}
{\bf \Gamma}_2(f) &\geq& \big{(} K|\D f|^2+\frac1{{\dim_{\rm loc}}} (\tr \H_f )^2 +\frac1{N-{\dim_{\rm loc}}}(\tr \H_f -\Delta f)^2 \big{)} \, \mm \\
&\geq& \big{(} K|\D f|^2+\frac1N (\Delta f)^2\big{)} \, \mm
\end{eqnarray*}
for any $f \in {\rm TestF}(M)$. The conclusion follows Proposition \ref{becondition}.
\end{proof}

\begin{lemma}\label{lemma-approx}
The inequality \eqref{ineq-2} holds for $g_i\in  L^\infty$ if  it holds for $\{g_i\}$ which are simple functions.
\end{lemma}
\begin{proof}
The inequality \eqref{ineq-2} holds if and only only 
\begin{equation}\label{ineq-3}
\int h\sum_{i,j} g_ig_j{\rm Ricci}_N(\nabla f_i, \nabla f_j)\,\d\mm \geq K\int h\sum_{i,j}g_ig_j\la \nabla f_i, \nabla f_j \ra\,\d\mm.
\end{equation}
for any $h\in L^\infty$, $h\geq 0$. Since $g_i$ is $L^\infty$, we can find simple functions $\{g_i^n\}_n$ such that $g_i^n \to g_i$ in ${L^\infty}$.  From hypothesis we know
\begin{equation}\label{ineq-4}
\int h\sum_{i,j} g^n_ig^n_j{\rm Ricci}_N(\nabla f_i, \nabla f_j)\,\d\mm \geq K\int h\sum_{i,j}g^n_ig^n_j\la \nabla f_i, \nabla f_j \ra\,\d\mm.
\end{equation}
From the property of $\bRic$ in Theorem 3.6.7 in \cite{G-N} and the definition of $\bRicn$ we know ${\rm Ricci}_N(\nabla f_i, \nabla f_j)$ are $L^1$.  Letting $n \to \infty$, the both sides in \eqref{ineq-4} converge in $L^1$ and we obtain \eqref{ineq-3}.
\end{proof}

\def\cprime{$'$} \def\cprime{$'$}


\begin{thebibliography}{1}


\bibitem{AGMR-R}
{\sc L.~Ambrosio, N.~Gigli, A.~Mondino, and T.~Rajala}, {\em Riemannian {R}icci
  curvature lower bounds in metric measure spaces with $\sum$-finite
  measure}, Trans.
Amer. Math. Soc., 367 (2015), pp.~ 4661--4701.

\bibitem{AGS-B}
{\sc L.~Ambrosio, N.~Gigli, and G.~Savar{\'e}}, {\em {B}akry-{\'E}mery curvature-dimension condition and {R}iemannian {R}icci curvature bounds}, Annals of Prob., 43 (2015), pp.~ 1405--1490.


\bibitem{AGS-C}
\leavevmode\vrule height 2pt depth -1.6pt width 23pt, {\em {C}alculus and heat flow in metric measure spaces and applications to spaces with {R}icci bounds from below},  Invent. Math., 195 (2014), pp. ~289--391.

\bibitem{AGS-M}
\leavevmode\vrule height 2pt depth -1.6pt width 23pt, {\em Metric measure
  spaces with {R}iemannian {R}icci curvature bounded from below}, Duke Math.
  J., 163 (2014), pp.~1405--1490.

\bibitem{AMS-N}
{\sc L.~Ambrosio, A.~Mondino, and G.~Savar{\'e}}, {\em {N}onlinear diffusion equations and curvature conditions in metric measure spaces}, to appear on Mem. Amer. Math. Soc. Preprint, arXiv:1509.07273, 2015.

\bibitem{BS-L}
{\sc K.~Bacher, K.~-T.~Sturm}, {\em Localization and tensorization properties of the curvature-dimension condition for metric measure spaces}, J. Funct. Anal., 259 (2010), pp. ~28–-56.

\bibitem{B-T} {\sc D. ~Bakry}, {\em Transformations de Riesz pour les semi-groupes sym\'etriques.ii.}, in S\'eminaire de probabilit\'es, XIX, 1983/84, vol. 1123 of Lecture Notes in Math., Springer, Berlin, 1985, pp. ~145--174.

\bibitem{B-H}
\leavevmode\vrule height 2pt depth -1.6pt width 23pt,
{\em  L'hypercontractivit\'e et son utilisation en th\'eorie des semigroupes}, in Lectures on probability theory (Saint-Flour, 1992), vol. 1581 of Lecture Notes in Math., Springer, Berlin, 1994, pp. ~1–-114.

\bibitem{BE-D}
{\sc D.~Bakry, M.~{\'E}mery}, {\em Diffusions hypercontractives}.
in S\'{e}minaire de probabilit\'es, XIX, 1983/84, vol. 1123 of Lecture Notes in Math., Springer, Berlin, 1985, pp.~177-–206.

\bibitem{BGL-A} {\sc D.~ Bakry, I.~ Gentil, and M.~ Ledoux}, {\em Analysis and geometry of Markov diffusion operators}, Springer, 2014.



\bibitem{C-D}
{\sc J.~Cheeger}, {\em Differentiability of {L}ipschitz functions on metric
  measure spaces}, Geom. Funct. Anal., 9 (1999), pp.~428--517.



\bibitem{EKS-O}
{\sc M.~ Erbar, K. ~Kuwada, and K.~-T.~Sturm}, {\em On the equivalence of the entropic curvature-dimension condition and {B}ochner’s inequality on metric measure spaces}, Invent. Math., 201 (2015), pp.~ 993--1071.


\bibitem{G-O}
{\sc N.~Gigli}, {\em On the differential structure of metric measure spaces and applications}, Mem. Amer. Math. Soc., 236 (2015), pp. vi+91.

\bibitem{G-N}
\leavevmode\vrule height 2pt depth -1.6pt width 23pt, {\em Non-smooth differential geometry}, to appear on Mem. Amer. Math. Soc..  Preprint, arXiv:1407.0809, 2014.


\bibitem{H-C}
{\sc B.~Han}, {\em Conformal transformation on metric measure spaces}.  Preprint, arXiv:1511.03115, 2015.




\bibitem{R-IG}
{\sc T.~Rajala}, {\em  Improved geodesics for the reduced curvature-dimension condition
in branching metric spaces}, Disc. Cont. Dyn. Sist., 33 (2013), pp. ~3043–-3056.

\bibitem{S-S}
{\sc G.~Savar{\'e}}, {\em Self-improvement of the {B}akry-\'{E}mery condition
  and {W}asserstein contraction of the heat flow in {${\rm RCD}(K,\infty)$}
  metric measure spaces}, Disc. Cont. Dyn. Sist., 34 (2014),
  pp.~1641--1661.

\bibitem{S-O1}
{\sc K.~-T.~Sturm}, {\em On the geometry of metric measure spaces}, I, Acta Math., 196 (2006), pp. 65--131.
\bibitem{S-O2}
\leavevmode\vrule height 2pt depth -1.6pt width 23pt, {\em On the geometry of metric measure spaces}, II, Acta Math., 196 (2006), pp. 133--177.

\bibitem{S-R}
\leavevmode\vrule height 2pt depth -1.6pt width 23pt, {\em Ricci tensor for diffusion operators and curvature-dimension inequalities under conformal transformations and time changes}. Preprint, arXiv:1401.0687, 2014.




\bibitem{Lott-Villani09}
{\sc J.~ Lott and C. ~Villani}, {\em Ricci curvature for metric-measure spaces via optimal transport}, Ann. of Math. (2), 169 (2009), pp. 903--991.
\end{thebibliography}
\end{document}